\documentclass[11pt]{amsart}
\usepackage[margin=1.1in]{geometry}
\usepackage{amsmath,amssymb,amsthm}
\usepackage{needspace}
\usepackage[colorlinks,linkcolor=blue,citecolor=blue,urlcolor=blue]{hyperref}

\numberwithin{equation}{section}
\newtheorem{maintheorem}{Theorem}

\newtheorem{proposition}{Proposition}[section]
\newtheorem{lemma}[proposition]{Lemma}
\theoremstyle{remark}

\theoremstyle{plain}

\newcommand{\dd}{\mathrm d}
\newcommand{\dstar}{\mathrm d^*}
\newcommand{\DeltaH}{\Delta_H}
\newcommand{\R}{\mathcal R}
\newcommand{\Ric}{\operatorname{Ric}}
\newcommand{\Id}{\operatorname{Id}}

\title{Rigidity in sharp eigenvalue estimates for differential forms}
\author{Tsz-Kiu Aaron Chow}
\address{Department of Mathematics, The Hong Kong University of Science and Technology, Clear Water Bay, Kowloon, Hong Kong}
\email{chowtka@ust.hk}

\author{James Koh}
\address{Department of Mathematics, The Hong Kong University of Science and Technology, Clear Water Bay, Kowloon, Hong Kong}
\email{jkoh@connect.ust.hk}

\author{Sen Yuk Wu}
\address{Department of Mathematics, The Hong Kong University of Science and Technology, Clear Water Bay, Kowloon, Hong Kong}
\email{sywuab@connect.ust.hk}

\thanks{T.-K. A. C. is partially supported by the National Natural Science Foundation of China under the Young Scientists Fund (Category C), Grant No.~NSFC26SC44.}

\expandafter\providecommand\csname subjclassname@2020\endcsname{\textup{2020} Mathematics Subject Classification}
\subjclass[2020]{Primary 58J50; Secondary 53C24, 58J32}
\keywords{Eigenvalue estimates, differential forms, curvature operator, rigidity, manifolds with boundary}

\begin{document}

\begin{abstract}
For \mbox{$1\leq p\leq n/2$}, we derive sharp eigenvalue estimates for the Hodge Laplacian on differential $p$-forms under the curvature condition introduced by Petersen and Wink, and study their equality cases. Equality recovers the stronger curvature operator lower bound $\R\geq\Id$ and, in the closed case, leaves only round or Sasakian metrics on the universal cover. For manifolds with boundary, weak $p$-convexity of the boundary makes the unit hemisphere the only equality case for the absolute boundary problem.
\end{abstract}

\maketitle

\section{Introduction}

A basic question in Riemannian geometry is whether equality in a sharp eigenvalue estimate determines the underlying metric. On a closed connected $n$-dimensional manifold with \mbox{$\Ric\geq(n-1)g$}, Lichnerowicz's estimate bounds the first positive eigenvalue of the Laplacian on functions from below by $n$, and Obata's theorem \cite{Obata} characterizes equality by the unit round sphere. For differential forms, Gallot and Meyer \cite{GM} obtained the sharp lower bound $p(n-p+1)$ for the Hodge Laplacian in degrees $1\leq p\leq n/2$ when the curvature operator is bounded below by the identity. Under this curvature bound, the equality eigenspaces were described in terms of conformal and special Killing forms in \cite{TachibanaYamaguchi,Kora}. Equality in higher degrees need not imply roundness, as shown by Gallot's nonround Sasakian examples \cite[Counterexample~6.4]{Gallot}.

For two-forms, a sharp eigenvalue estimate under a positive lower bound for isotropic curvature was proved in \cite{ChowWang}, where the conformal Killing equations relate spectral equality to geometric rigidity. We consider the corresponding questions for differential $p$-forms under the curvature condition of Petersen and Wink.

Let $\R:\Lambda^2TM\to\Lambda^2TM$ be the curvature operator normalized so that the unit sphere has $\R=\Id$, and denote its eigenvalues by $\mu_1\leq\cdots\leq\mu_{\binom n2}$. Petersen and Wink \cite{PW} proved that, on a closed manifold, positivity of $\sum_{j=1}^{n-p}\mu_j$ implies vanishing of the $p$th Betti number for $1\leq p\leq n/2$. This condition is weaker than positivity of the curvature operator, but still controls the curvature term in the Weitzenb\"ock formula. We assume the normalized lower bound
\begin{equation}\label{eq:curvature}
\sum_{j=1}^{n-p}\mu_j\geq n-p
\end{equation}
at every point of $M$. Under this hypothesis, we derive the Gallot--Meyer eigenvalue bound on closed manifolds and study the geometric consequences of equality.

Let $\DeltaH=\dd\dstar+\dstar\dd$ denote the Hodge Laplacian. On a closed manifold, its first eigenvalue on $p$-forms is
\[
\lambda_1^{(p)}(M,g)=\inf_{0\ne\omega\in\Omega^p(M)}\frac{\displaystyle\int_M\bigl(|\dd\omega|^2+|\dstar\omega|^2\bigr)\,dV_g}{\displaystyle\int_M|\omega|^2\,dV_g}.
\]

\begin{maintheorem}\label{thm:closed}
Let $(M^n,g)$ be closed and connected, with $n\geq3$, and let $1\leq p\leq n/2$ be an integer. If \eqref{eq:curvature} holds, then
\[
\lambda_1^{(p)}(M,g)\geq p(n-p+1).
\]
Equality implies $\R\geq\Id$. If $p=1$, then $M$ is isometric to the unit sphere. If $p\geq2$, then its universal cover is diffeomorphic to $S^n$ and is either the unit round sphere or a nonround Sasakian manifold. In the nonround case, $n$ is odd and $p$ is even.
\end{maintheorem}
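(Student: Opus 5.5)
The proof follows the Gallot--Meyer scheme, with the curvature term controlled by the Petersen--Wink estimate instead of by $\R\geq\Id$.

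\textbf{Step 1: Weitzenb\"ock formula and the curvature term.} For a $p$-form we write
\[
\DeltaH\omega=\nabla^*\nabla\omega+\mathrm{Ric}_p(\omega),
\]
where the curvature term can be expressed as $\langle \mathrm{Ric}_p\omega,\omega\rangle=\sum_\alpha \mu_\alpha|\Xi_\alpha\omega|^2$. Here $\Xi_\alpha$ is an orthonormal eigenbasis of $\R$ acting as derivations on $\Lambda^p$. Petersen and Wink showed that for $1\leq p\leq n/2$,
\[
\langle \mathrm{Ric}_p\omega,\omega\rangle\geq \frac{p}{n-p}\Bigl(\sum_{j=1}^{n-p}\mu_j\Bigr)\cdot\frac{|\Xi\omega|^2}{p}.
\]
The precise combinatorial lemma is that the weights $|\Xi_\alpha\omega|^2$ satisfy $\sum_\alpha|\Xi_\alpha\omega|^2=p(n-p)|\omega|^2$ and each weight is at most $\tfrac{1}{n-p}$ of the total. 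Under \eqref{eq:curvature} this yields $\langle\mathrm{Ric}_p\omega,\omega\rangle\geq p(n-p)|\omega|^2$. That is exactly the bound $\R\geq\Id$ would give.

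\textbf{Step 2: the refined Kato/Gallot--Meyer inequality.} We split $\nabla\omega$ into its parts in $\Lambda^{p+1}$, in $\Lambda^{p-1}$, and the traceless remainder (the twistor part). This gives
\[
|\nabla\omega|^2\geq \tfrac{1}{p+1}|\dd\omega|^2+\tfrac{1}{n-p+1}|\dstar\omega|^2.
\]
We integrate the Weitzenb\"ock formula against $\omega$, set $\lambda=\lambda_1^{(p)}$, and use the Hodge decomposition to take $\omega$ either exact or coexact; harmonic forms vanish by Step 1. For coexact $\omega$ the result is $\lambda\geq \frac{p+1}{p}\,p(n-p)$. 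For exact $\omega$ it is $\lambda\geq\frac{n-p+1}{n-p}\,p(n-p)=p(n-p+1)$. Since $p\leq n/2$, the bound $p(n-p+1)$ is the smaller of the two, and it is the one claimed in Theorem~\ref{thm:closed}.

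\textbf{Step 3: equality.} Equality forces an exact eigenform $\omega=\dd\eta$. Then $\nabla\omega$ is pure $\Lambda^{p-1}$-part, so $\omega$ is a closed conformal Killing form, equivalently $\ast\omega$ is a special Killing form. Equality also holds pointwise in the curvature estimate of Step 1.

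The \textbf{main obstacle} is deducing $\R\geq\Id$ from this. I would try two routes. The first is to analyze the equality case of the Petersen--Wink lemma at points where $\omega\neq0$: equality there forces the weights to concentrate on the lowest $n-p$ eigenvalues in a way that makes the lowest eigenvalues equal. The second is to differentiate the special Killing equation $\nabla_X\nabla\omega\sim X\wedge\omega$-type identities. Their integrability condition $\R(X\wedge Y)\omega=(X\wedge Y)\cdot\omega$ shows that $\R-\Id$ annihilates $\omega$ in a strong sense. Combined with the fact that the Petersen--Wink sum is attained, this gives $\mu_1\geq1$ on the open dense set $\{\omega\neq0\}$ (by unique continuation), hence everywhere.

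\textbf{Step 4: classification.} Once $\R\geq\Id$ holds, the equality cases are known from \cite{TachibanaYamaguchi,Kora,Gallot}. For $p=1$, $\omega=\dd f$ with $\nabla^2 f=-fg$ after normalization, so Obata's theorem gives the unit sphere. For $p\geq2$, $\R\geq\Id>0$ and the B\"ohm--Wilking theorem make the universal cover diffeomorphic to $S^n$. A special Killing $p$-form on a manifold with $\R\geq\Id$ is either induced by a round-sphere structure or, through the cone construction (B\"ar-type parallel forms on the metric cone, using $\R\geq\Id$), comes from a Sasakian structure, with $\omega$ built from powers of $\dd\xi^\flat$. Parity considerations from this construction force $n$ odd and $p$ even in the nonround case.
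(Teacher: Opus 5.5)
There is a genuine gap exactly where you locate the main obstacle, and neither proposed route closes it. Your Steps 1--2 are fine; the exact/coexact split is equivalent to the paper's combined inequality.

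First, Step 3 claims that a closed conformal Killing form $\omega$ makes $\star\omega$ \emph{special} Killing. This is unjustified: $P\omega=0$ and $\dd\omega=0$ only make $\star\omega$ Killing. Set $\alpha=[p(n-p+1)]^{-1}\dstar\omega$, so that $\dd\alpha=\omega$. Then the special condition is equivalent to $\nabla_X\alpha=\frac1p\iota_X\omega$, and nothing in degree $p$ gives you that. So Route 2, which uses integrability conditions of the special Killing equation, assumes what must be proved.

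Route 1 cannot work either, because pointwise equality in the Petersen--Wink estimate in degree $p$ does not force $\R\geq\Id$. Take $n=4$, $p=2$, and the algebraic curvature operator with vanishing traceless Ricci part, scalar curvature $s$ with $\frac{s}{12}=2$, and Weyl parts $W^-=0$, $W^+=\mathrm{diag}(-\frac32,-\frac12,2)$. Its eigenvalues are $\frac12,\frac32,2,2,2,4$, so $\mu_1+\mu_2=2=n-p$. A unit self-dual eigenvector $\omega$ for the eigenvalue $4$ has weight $2$ on each of the other two self-dual eigenvectors and $0$ elsewhere. Hence $\langle\mathcal Q(\omega),\omega\rangle=4=p(n-p)|\omega|^2$, although $\mu_1=\frac12$.

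The missing idea is to run the Bochner argument a second time, in degree $p-1$, on the coclosed primitive $\alpha$. Since $\DeltaH\alpha=p(n-p+1)\alpha$ and $\dstar\alpha=0$, we have $\|\dd\alpha\|^2=p(n-p+1)\|\alpha\|^2$. The integral formula in degree $p-1$ then gives
\[
0=\tfrac{p-1}{p}\|\dd\alpha\|^2-(p-1)(n-p+1)\|\alpha\|^2=\|P\alpha\|^2+\int_M\bigl(\langle\mathcal Q(\alpha),\alpha\rangle-(p-1)(n-p+1)|\alpha|^2\bigr)\geq0,
\]
where the last step uses \eqref{eq:PW} in degree $p-1$ together with $\sum_{j=1}^{n-p+1}\mu_j\geq n-p+1$. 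This yields two things:
\begin{itemize}
\item $P\alpha=0$, which is precisely $\nabla_X\alpha=\frac1p\iota_X\omega$, the special Killing system.
\item $\sum_{j=1}^{n-p+1}\mu_j=n-p+1$ wherever $\alpha\neq0$.
\end{itemize}
Since $\mu_{n-p+1}\geq\mu_{n-p}\geq1$ and $\sum_{j=1}^{n-p}\mu_j\geq n-p$, the second point forces $\mu_1=\cdots=\mu_{n-p+1}=1$, i.e.\ $\R\geq\Id$, on the set where $\alpha\neq0$. That set is dense because $p^2|\alpha|^2+|\omega|^2$ is a positive constant.

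Two smaller points:
\begin{itemize}
\item For $n=2p$ the coexact bound also equals $p(p+1)$, so you must produce a closed equality eigenform, e.g.\ $\dd\dstar\omega$, or $\star\omega$ on an oriented cover.
\item In Step 4, the classification of special Killing forms also allows $3$-Sasakian, nearly K\"ahler and weak $G_2$ metrics, which you must exclude. They are Einstein with $\Ric=(n-1)g$, so $\R\geq\Id$ and $\operatorname{tr}(\R-\Id)=0$ give $\R=\Id$.
\end{itemize}
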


Gallot's examples show that the nonround alternative in Theorem~\ref{thm:closed} cannot be omitted. We next consider the absolute eigenvalue problem on manifolds with boundary. The Gallot--Meyer estimate under $\R\geq\Id$ extends to absolute boundary conditions when the boundary is weakly $p$-convex \cite[Remark~2.2]{GueriniSavo}. We obtain the same bound under the weaker interior curvature condition \eqref{eq:curvature}. Surprisingly, equality forces the unit hemisphere, excluding the nonround alternative that occurs in the closed case.

Let $M$ be compact with smooth boundary $\Sigma=\partial M$, let $\nu$ denote the outward unit normal, and let $A(X,Y)=\langle\nabla_X\nu,Y\rangle$ be the second fundamental form of $\Sigma$. Denote its principal curvatures by $\kappa_1\leq\cdots\leq\kappa_{n-1}$. The boundary is called weakly $p$-convex if
\begin{equation}\label{eq:pconvex}
\kappa_1+\cdots+\kappa_p\geq0
\end{equation}
at every point of $\Sigma$. The first eigenvalue of the Hodge Laplacian with absolute boundary conditions is given by
\[
\lambda_{1,\mathrm{abs}}^{(p)}(M,g)=\inf_{\substack{0\ne\omega\in\Omega^p(M)\\\iota_\nu\omega=0\text{ on }\Sigma}}\frac{\displaystyle\int_M\bigl(|\dd\omega|^2+|\dstar\omega|^2\bigr)\,dV_g}{\displaystyle\int_M|\omega|^2\,dV_g}.
\]
Here $\iota_\nu\omega=0$ means that $\omega$ has no normal component on $\Sigma$.

\begin{maintheorem}\label{thm:boundary}
Let $(M^n,g)$ be compact and connected with nonempty smooth boundary, with $n\geq3$, and let $1\leq p\leq n/2$ be an integer. Suppose that \eqref{eq:curvature} holds and the boundary is weakly $p$-convex. Then
\[
\lambda_{1,\mathrm{abs}}^{(p)}(M,g)\geq p(n-p+1).
\]
Equality holds if and only if $M$ is isometric to the unit hemisphere.
\end{maintheorem}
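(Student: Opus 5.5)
The estimate will come from the Gallot--Meyer splitting of $\nabla\omega$ combined with the Bochner formula, with the boundary terms handled in the style of Guerini--Savo. Take a first eigenform $\omega$ for the absolute problem, so $\DeltaH\omega=\lambda\omega$ with $\iota_\nu\omega=0$ and $\iota_\nu \dd\omega=0$ on $\Sigma$. Decompose
\[
\nabla\omega=\frac{1}{p+1}\dd\omega-\frac{1}{n-p+1}\,\theta\wedge\dstar\omega+T\omega
\]
(schematically), where $T\omega$ is the twistor part. This gives the pointwise inequality
\[
|\nabla\omega|^2\geq\frac{1}{p+1}|\dd\omega|^2+\frac{1}{n-p+1}|\dstar\omega|^2,
\]
with equality exactly when $\omega$ is conformal Killing. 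The Weitzenb\"ock formula reads $\DeltaH=\nabla^*\nabla+\mathfrak R_p$. Following Petersen--Wink, condition \eqref{eq:curvature} gives $\langle\mathfrak R_p\omega,\omega\rangle\geq p(n-p)|\omega|^2$; this is their algebraic lemma, rescaled.

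Integrating $\langle\DeltaH\omega,\omega\rangle$ by parts produces a boundary term. For absolute forms this term equals $\int_\Sigma\langle S^{[p]}\omega,\omega\rangle$, where $S^{[p]}$ is the induced action of the shape operator. Weak $p$-convexity \eqref{eq:pconvex} makes $S^{[p]}\geq\kappa_1+\dots+\kappa_p\geq0$ on tangential $p$-forms, so the boundary term is nonnegative. Next split $\omega$ into closed and coclosed pieces, which is allowed since the absolute conditions are preserved by the Hodge decomposition. For a closed eigenform with $\lambda\|\omega\|^2=\|\dstar\omega\|^2$ we obtain
\[
\lambda\geq\frac{1}{n-p+1}\lambda+p(n-p),
\]
that is, $\lambda\geq p(n-p+1)$. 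For a coclosed eigenform the analogous computation gives $\lambda\geq (p+1)(n-p)\geq p(n-p+1)$, using $p\leq n/2$.

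In the equality case the whole chain must be sharp. First, $\omega$ is a closed conformal Killing form, so $\xi=\dstar\omega$ is a special Killing-type object and $\nabla_X\omega=-\tfrac1{n-p+1}X^\flat\wedge\xi$. Second, the curvature term is sharp, and as in the closed theorem this forces $\R\geq\Id$. I would reuse whatever argument the paper gives for Theorem~\ref{thm:closed}, namely that equality in Petersen--Wink at a point with $\omega\neq0$ pins the spectrum, combined with unique continuation so that $\omega$ does not vanish on open sets. Third, the boundary term vanishes: $\langle S^{[p]}\omega,\omega\rangle=0$ on $\Sigma$.

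The next step is to produce a function. Differentiating the conformal Killing equation again yields $\nabla\xi$ in terms of $\omega$; in the closed case this leads to the round-or-Sasakian dichotomy. On a manifold with boundary I would aim for an Obata/Reilly-type argument. The plan is to build a function $f$, for instance $f=\langle\omega,\eta\rangle$ for a suitable parallel-type form, or more simply take the $1$-form obtained by contracting, with $\nabla^2 f=-fg$ and Neumann data $\partial_\nu f=0$ coming from $\iota_\nu\omega=0$. Then Reilly's theorem, Obata with Neumann condition on a convex boundary, gives the hemisphere. In the Sasakian alternative, a unit Killing field $\xi$ tangent to the boundary together with the vanishing boundary term should force $\Sigma$ to be totally geodesic. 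The Sasakian identity $\nabla^2\xi=-\xi\otimes g$ type relations restricted to $\Sigma$ then give a contradiction, because $\iota_\nu\omega=0$ with $\omega=\dd\eta\wedge\cdots$ is incompatible with transversality of the contact structure at $\Sigma$.

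This exclusion of the nonround Sasakian case via the boundary conditions is the main obstacle. To handle it, I would first show $A\equiv0$ using equality in $S^{[p]}\omega=0$ together with the algebraic structure of $\omega$, which is generic enough on $\Sigma$ to fill all $p$-planes. Once $\Sigma$ is totally geodesic, doubling $M$ gives a closed manifold satisfying equality, with a reflection symmetry. Theorem~\ref{thm:closed} applies to the double, and a Sasakian structure cannot admit an isometric reflection fixing a hypersurface, since that reflection would reverse the contact form. Hence the double is the round sphere and $M$ is the hemisphere. The converse direction is checked directly on the hemisphere using restrictions of the $\dd(x_i\,\dd x_j\wedge\cdots)$ eigenforms.
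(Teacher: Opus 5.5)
Your proof of the inequality works. Splitting an eigenform into closed and coclosed parts is a valid alternative to the paper's single inequality \eqref{eq:spectral-bound} for all tangential forms. For $p=1$, your Neumann--Obata argument with $f=n^{-1}\dstar\omega$ is the paper's argument.

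The equality case for $p\geq2$ has genuine gaps. First, when $n=2p$ your coclosed bound $(p+1)(n-p)$ equals $p(n-p+1)$, so the extremal form need not be closed. Since $\star$ turns absolute into relative conditions, this case cannot be reduced to the closed one. The paper treats it separately via $\dd\omega$ in degree $p+1$ (Proposition~\ref{prop:boundary-pair}(ii)).

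Second, equality in \eqref{eq:PW} in degree $p$ only gives $\sum_{j=1}^{n-p}\mu_j=n-p$ where $\omega\neq0$. That alone does not pin the spectrum, and unique continuation does not change this. The paper gets $\R\geq\Id$ from a second Bochner argument in degree $p-1$ applied to $\alpha=[p(n-p+1)]^{-1}\dstar\omega$, where equality forces $\mu_{n-p+1}=1$ (Lemma~\ref{lem:curvature-equality}). With boundary, this cannot be copied from the closed case. The degree-$(p-1)$ boundary term $\sum(\kappa_{i_1}+\cdots+\kappa_{i_{p-1}})|\alpha(e_{i_1},\ldots,e_{i_{p-1}})|^2$ is not controlled by weak $p$-convexity. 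The paper differentiates $\iota_\nu\omega=0$ along $\Sigma$ and uses the conformal Killing equation with $\iota_\nu\dd\omega=\iota_\nu\dstar\omega=0$ to get $\kappa_a\iota_{e_a}\omega=0$. Hence $\omega$ and $\nabla\omega$ vanish on $U=\{\kappa_1<0\}$. The conserved quantity $p^2|\alpha|^2+|\omega|^2$ then forces $U=\emptyset$, i.e.\ $A\geq0$, which is needed later.

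Your proposal does not exclude the Sasakian alternative, which you correctly flag as the main obstacle. A special Killing form of degree $\geq1$ does not in general produce a function with $\nabla^2f=-fg$; that failure is exactly the Sasakian case. So ``$f=\langle\omega,\eta\rangle$ for a suitable parallel-type form'' gives nothing.

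Your route to $A\equiv0$ rests on an unjustified genericity. The boundary conditions only give $\kappa_a\iota_{e_a}\omega=\kappa_a\iota_{e_a}\alpha=0$, so $\omega|_\Sigma$ and $\alpha|_\Sigma$ live on $\ker A$. This bounds $\dim\ker A$ from below but does not force $A=0$. Even granting $A\equiv0$, the doubled metric is in general not smooth across $\Sigma$, so neither Theorem~\ref{thm:closed} nor Semmelmann's classification applies to it. The no-reflection claim also needs the reflection to preserve the Sasakian structure, which you do not show.

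The missing idea is to read the boundary condition on the metric cone:
\begin{itemize}
\item $\widehat\Phi=r^{p-1}\dd r\wedge\alpha+\frac{r^p}{p}\omega$ is parallel, and $\iota_\nu\alpha=\iota_\nu\omega=0$ puts $\widehat\nu$ in the parallel distribution $K=\ker\widehat\Phi$.
\item $K\cap T\widehat\Sigma$ is parallel on the boundary cone. That cone is irreducible by B\"ar's lemma unless $\Sigma$ is the unit sphere; this step uses that $M$ is a ball, from Wu's theorem, which needs $\R\geq\Id$ and $A\geq0$.
\item Hence $K=\mathbb R\widehat\nu$ along the boundary, and $V=\pi_K\partial_r$ is a nonzero field vanishing on $\Sigma$, giving $\lambda_1^D(M)\leq n$.
\item Reilly's inequality $\lambda_1^D\geq n$ and its equality case then give the hemisphere. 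If $\Sigma$ is round, Hang--Wang applies, again using $A\geq0$.
\end{itemize}
So the Obata-type object comes with Dirichlet data, not the Neumann data you were looking for.
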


We derive the eigenvalue estimates by combining the pointwise curvature estimate of Petersen and Wink \cite{PW} with the integral formula for differential forms in \cite{ChowZhu} and the orthogonal decomposition of the gradient term~\eqref{eq:orthogonal-decomposition}. At equality, the twistor term in this decomposition vanishes, so the equality eigenforms are conformal Killing. The main point of our rigidity argument is that these overdetermined equations interact with \eqref{eq:curvature} to recover the stronger curvature operator bound $\R\geq\Id$.

For $2\leq p<n/2$, an equality eigenform $\omega$ is closed, and its canonical coclosed primitive is $\alpha=[p(n-p+1)]^{-1}\dstar\omega$, with $\dd\alpha=\omega$. A second application of the Bochner identity, now in degree $p-1$, forces equality in the corresponding curvature estimate and gives
\[
\nabla_X\alpha=\frac1p\iota_X\omega,\qquad
\nabla_X\omega=-pX^\flat\wedge\alpha.
\]
Thus $\alpha$ is a special Killing form. Equality in the adjacent degree, together with \eqref{eq:curvature}, forces $\R\geq\Id$. The weaker condition ~\eqref{eq:curvature} therefore recovers the full curvature operator bound in the equality case. In middle degree $2p=n$, a suitable closed or coclosed equality component yields the corresponding special Killing system and the same curvature conclusion; see Propositions~\ref{prop:closed-pair} and~\ref{prop:boundary-pair}.

This mechanism has a close analogue in spin geometry. The Weitzenb\"ock formula and the decomposition of the spinor gradient term into its Dirac and twistor components yield Friedrich's inequality for the Dirac operator under a positive scalar curvature lower bound. At equality, the eigenspinor is a real Killing spinor \cite{Friedrich}. On compact spin manifolds with boundary, an analogue of Friedrich's inequality holds under suitable local boundary conditions, with equality yielding real Killing spinors and a minimal boundary \cite{HMZBoundary}. Conformal Killing forms play the role of twistor spinors in the decomposition for differential forms, while special Killing forms, like real Killing spinors, correspond after normalization to parallel objects on the metric cone \cite{Semmelmann,Bar}. This analogy concerns the equality equations and their geometric consequences: a real Killing spinor forces the metric to be Einstein, although not necessarily round, whereas special Killing forms also occur on non-Einstein Sasakian manifolds. In our closed problem, the recovered bound $\R\geq\Id$ and the classification of special Killing forms leave only the round and nonround Sasakian alternatives on the universal cover described in Theorem~\ref{thm:closed}.

The boundary problem imposes a further restriction on this equality structure. For $p\geq2$, the resulting coupled equations and the absolute boundary conditions, together with weak $p$-convexity, force $A\geq0$. Moreover, the associated parallel form on the metric cone annihilates the normal to the cone boundary. Its kernel defines a parallel distribution containing that normal, which, together with the recovered curvature bounds, leads to hemisphere rigidity. Thus spectral equality strengthens both the interior and boundary curvature assumptions, and the boundary conditions exclude the nonround equality geometries of the closed problem.

For $p=1$, the equality system reduces to Obata's equation, with the boundary case governed by the classical Neumann rigidity theorem \cite{Escobar}. Special Killing forms on the boundary were also used in \cite{RS} to prove Euclidean-ball rigidity under nonnegative interior curvature and positive lower bounds on sums of principal curvatures. Here the special Killing equations arise from spectral equality in the interior, and weak $p$-convexity allows the totally geodesic boundary of the hemisphere.

\section{The eigenvalue estimate and its equality case}

\subsection{The eigenvalue estimates}

Let $(M^n,g)$ be compact and connected, with boundary $\Sigma$ possibly empty. We write $\|\cdot\|$ for the $L^2(M)$ norm and denote by $\mathcal Q$ the curvature term in the Weitzenb\"ock formula
\[
(\dd\dstar+\dstar\dd)\omega=\DeltaH\omega=\nabla^*\nabla\omega+\mathcal Q(\omega).
\]
For a $p$-form $\omega$, define
\[
P_X\omega=\nabla_X\omega-\frac1{p+1}\iota_X\dd\omega+\frac1{n-p+1}X^\flat\wedge\dstar\omega.
\]
The form $\omega$ is conformal Killing if $P\omega=0$. The orthogonal decomposition of $\nabla\omega$ \cite[Section~2]{Semmelmann} gives
\begin{equation}\label{eq:orthogonal-decomposition}
|\nabla\omega|^2=|P\omega|^2+\frac1{p+1}|\dd\omega|^2+\frac1{n-p+1}|\dstar\omega|^2.
\end{equation}

\begin{lemma}\label{lem:integral}
Let $1\leq p\leq n-1$, and let $\omega$ be a smooth $p$-form satisfying $\iota_\nu\omega=0$ on $\Sigma$. Then
\[
\begin{aligned}
&\frac{p}{p+1}\|\dd\omega\|^2+\frac{n-p}{n-p+1}\|\dstar\omega\|^2\\
&=\|P\omega\|^2+\int_M\langle\mathcal Q(\omega),\omega\rangle+\int_\Sigma\sum_{i_1<\cdots<i_p}(\kappa_{i_1}+\cdots+\kappa_{i_p})|\omega(e_{i_1},\ldots,e_{i_p})|^2,
\end{aligned}
\]
where $e_1,\ldots,e_{n-1}$ is an orthonormal frame at a boundary point with $A(e_a,e_b)=\kappa_a\delta_{ab}$. 
\end{lemma}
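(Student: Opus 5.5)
We integrate the Weitzenb\"ock formula against $\omega$ and combine it with the orthogonal decomposition \eqref{eq:orthogonal-decomposition}; the only new work is identifying the boundary terms. Pairing $\DeltaH\omega=\nabla^*\nabla\omega+\mathcal Q(\omega)$ with $\omega$ and integrating by parts twice gives
\[
\|\dd\omega\|^2+\|\dstar\omega\|^2+B_1=\|\nabla\omega\|^2+\int_M\langle\mathcal Q(\omega),\omega\rangle+B_2 .
\]
Here the boundary terms are
\[
B_1=\int_\Sigma\bigl(\langle\dstar\omega,\iota_\nu\omega\rangle-\langle\iota_\nu\dd\omega,\omega\rangle\bigr)
\quad\text{and}\quad
B_2=-\int_\Sigma\langle\nabla_\nu\omega,\omega\rangle=-\tfrac12\int_\Sigma\partial_\nu|\omega|^2 .
\]
The signs in $B_1$ and $B_2$ follow from Green's formulas $\int_M\langle\dd\alpha,\beta\rangle=\int_M\langle\alpha,\dstar\beta\rangle+\int_\Sigma\langle\alpha,\iota_\nu\beta\rangle$ and $\int_M\langle\nabla^*\nabla\omega,\omega\rangle=\|\nabla\omega\|^2-\int_\Sigma\langle\nabla_\nu\omega,\omega\rangle$; I would fix them carefully at the start. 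Since $\iota_\nu\omega=0$, the term $\langle\dstar\omega,\iota_\nu\omega\rangle$ drops out. Substituting $\|\nabla\omega\|^2=\|P\omega\|^2+\frac1{p+1}\|\dd\omega\|^2+\frac1{n-p+1}\|\dstar\omega\|^2$ then produces exactly the left-hand side $\frac p{p+1}\|\dd\omega\|^2+\frac{n-p}{n-p+1}\|\dstar\omega\|^2$. It remains to show
\[
\int_\Sigma\Bigl(\langle\iota_\nu\dd\omega,\omega\rangle-\langle\nabla_\nu\omega,\omega\rangle\Bigr)
=\int_\Sigma\sum_{i_1<\cdots<i_p}(\kappa_{i_1}+\cdots+\kappa_{i_p})|\omega(e_{i_1},\ldots,e_{i_p})|^2 ,
\]
which is the main computation.

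To compute the boundary integrand, work in an adapted frame $e_1,\dots,e_{n-1},e_n=\nu$ at a boundary point. Write $\dd\omega=\sum_k e^k\wedge\nabla_{e_k}\omega$, so that
\[
\iota_\nu\dd\omega=\nabla_\nu\omega-\sum_{a\le n-1}e^a\wedge\iota_\nu\nabla_{e_a}\omega .
\]
The normal derivatives cancel, and the integrand becomes $-\sum_a\langle e^a\wedge\iota_\nu\nabla_{e_a}\omega,\omega\rangle$. Now differentiate the boundary condition $\iota_\nu\omega=0$ tangentially:
\[
0=\nabla_{e_a}(\iota_\nu\omega)=\iota_{\nabla_{e_a}\nu}\omega+\iota_\nu\nabla_{e_a}\omega ,
\]
and $\nabla_{e_a}\nu=\sum_b A(e_a,e_b)e_b=\kappa_ae_a$ is tangent. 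Hence $\iota_\nu\nabla_{e_a}\omega=-\kappa_a\iota_{e_a}\omega$, and the integrand equals
\[
\sum_a\kappa_a\langle e^a\wedge\iota_{e_a}\omega,\omega\rangle .
\]
Since $\omega$ is purely tangential on $\Sigma$, expanding in the basis $e^{i_1}\wedge\cdots\wedge e^{i_p}$ with $i_j\le n-1$ shows that $e^a\wedge\iota_{e_a}$ acts on each basis element as multiplication by $1$ if $a\in\{i_1,\dots,i_p\}$ and by $0$ otherwise. This yields exactly $\sum(\kappa_{i_1}+\cdots+\kappa_{i_p})|\omega(e_{i_1},\ldots,e_{i_p})|^2$.

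The main obstacle is bookkeeping: keeping consistent sign conventions for $\dstar$, for the boundary term of $\nabla^*\nabla$, and for $A$ with the outward normal, so that the boundary contributions add up rather than cancel. As a check, I would verify the sign on a Euclidean ball with $p=1$. Beyond that, the argument uses only the decomposition \eqref{eq:orthogonal-decomposition}, which I take as given from \cite{Semmelmann}, and the computation is identical to the one in \cite{ChowZhu}, which one could cite directly.
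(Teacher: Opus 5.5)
Your proof is correct and follows the paper's route. The paper cites \cite{ChowZhu} for the integrated Weitzenb\"ock identity with boundary term $\sum_a\kappa_a\langle e^a\wedge\iota_{e_a}\omega,\omega\rangle$; you rederive it from Green's formulas and the differentiated boundary condition $\iota_\nu\nabla_{e_a}\omega=-\kappa_a\iota_{e_a}\omega$, and then make the same combinatorial reduction and the same substitution of \eqref{eq:orthogonal-decomposition}. One small slip: your expansion of $\iota_\nu\dd\omega$ drops the term $-\nu^\flat\wedge\iota_\nu\nabla_\nu\omega$, but this is harmless, since its inner product with $\omega$ is $\langle\iota_\nu\nabla_\nu\omega,\iota_\nu\omega\rangle=0$.
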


\begin{proof}
Let $e^1,\ldots,e^{n-1}$ be dual to $e_1,\ldots,e_{n-1}$. Since $\iota_\nu\omega=0$, \cite[Corollary~3.4 and Lemma~3.6]{ChowZhu} gives
\[
\begin{aligned}
&\|\dd\omega\|^2+\|\dstar\omega\|^2\\
&=\|\nabla\omega\|^2+\int_M\langle\mathcal Q(\omega),\omega\rangle+\int_\Sigma\sum_{i,j=1}^{n-1}A(e_i,e_j)\langle e^i\wedge\iota_{e_j}\omega,\omega\rangle.
\end{aligned}
\]
Since $A(e_a,e_b)=\kappa_a\delta_{ab}$, we have
\[
\begin{aligned}
\sum_{i,j=1}^{n-1}A(e_i,e_j)\langle e^i\wedge\iota_{e_j}\omega,\omega\rangle&=\sum_{a=1}^{n-1}\kappa_a\langle e^a\wedge\iota_{e_a}\omega,\omega\rangle\\
&=\sum_{a=1}^{n-1}\kappa_a|\iota_{e_a}\omega|^2\\
&=\sum_{a=1}^{n-1}\kappa_a\sum_{j_1<\cdots<j_{p-1}}|\omega(e_a,e_{j_1},\ldots,e_{j_{p-1}})|^2\\
&=\sum_{a=1}^{n-1}\kappa_a\sum_{\substack{i_1<\cdots<i_p\\a\in\{i_1,\ldots,i_p\}}}|\omega(e_{i_1},\ldots,e_{i_p})|^2\\
&=\sum_{i_1<\cdots<i_p}(\kappa_{i_1}+\cdots+\kappa_{i_p})|\omega(e_{i_1},\ldots,e_{i_p})|^2.
\end{aligned}
\]
Together with \eqref{eq:orthogonal-decomposition}, this gives
\[
\begin{aligned}
&\frac{p}{p+1}\|\dd\omega\|^2+\frac{n-p}{n-p+1}\|\dstar\omega\|^2\\
&=\|\dd\omega\|^2+\|\dstar\omega\|^2-\|\nabla\omega\|^2+\|P\omega\|^2\\
&=\|P\omega\|^2+\int_M\langle\mathcal Q(\omega),\omega\rangle+\int_\Sigma\sum_{i_1<\cdots<i_p}(\kappa_{i_1}+\cdots+\kappa_{i_p})|\omega(e_{i_1},\ldots,e_{i_p})|^2.
\end{aligned}
\]
\end{proof}

For $1\leq p\leq n/2$, the algebraic curvature operator $\R-\frac1{n-p}(\sum_{j=1}^{n-p}\mu_j)\Id$ has its lowest $n-p$ eigenvalues summing to zero. Applying \cite[Lemmas~2.1--2.2 and Proposition~2.5(a)]{PW} to this operator gives
\begin{equation}\label{eq:PW}
\langle\mathcal Q(\omega),\omega\rangle\geq p\left(\sum_{j=1}^{n-p}\mu_j\right)|\omega|^2.
\end{equation}
If \eqref{eq:curvature} holds, the ordering of the eigenvalues gives
\begin{equation}\label{eq:ordering}
\mu_{n-p}\geq1,\qquad \sum_{j=1}^k\mu_j\geq n-p+(k-n+p)\mu_{n-p}\geq k\quad(n-p\leq k\leq n-1).
\end{equation}
In particular, \eqref{eq:PW} in degree one gives $\Ric\geq(n-1)g$.

Suppose $1\leq p\leq n/2$, \eqref{eq:curvature} holds, and $\Sigma$ is weakly $p$-convex when nonempty. For every smooth $p$-form $\omega$ with $\iota_\nu\omega=0$, Lemma~\ref{lem:integral} and \eqref{eq:PW} give
\begin{equation}\label{eq:spectral-bound}
\begin{aligned}
&\frac{n-p}{n-p+1}\left(\|\dd\omega\|^2+\|\dstar\omega\|^2-p(n-p+1)\|\omega\|^2\right)\\
&\geq\|P\omega\|^2+\int_M\left(\langle\mathcal Q(\omega),\omega\rangle-p(n-p)|\omega|^2\right)+\frac{n-2p}{(p+1)(n-p+1)}\|\dd\omega\|^2\\
&\geq0.
\end{aligned}
\end{equation}
Taking the Rayleigh infimum proves the eigenvalue bounds in Theorems~\ref{thm:closed} and~\ref{thm:boundary}. At equality, every corresponding eigenform satisfies
\[
P\omega=0,\qquad \mathcal Q(\omega)=p(n-p)\omega,
\]
and $\dd\omega=0$ if $2p<n$.

\subsection{Equality in the eigenvalue estimate}

\begin{lemma}\label{lem:curvature-equality}
Let $(M^n,g)$ be a Riemannian manifold, possibly with boundary, and let $2\leq p\leq n/2$. Suppose that \eqref{eq:curvature} holds. If a smooth $(p-1)$-form $\alpha$ is nonzero on a dense subset of $M$ and satisfies
\[
\langle\mathcal Q(\alpha),\alpha\rangle=(p-1)(n-p+1)|\alpha|^2,
\]
then $\R\geq\Id$.
\end{lemma}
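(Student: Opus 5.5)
We apply the Petersen--Wink estimate \eqref{eq:PW} in degree $q=p-1$ and look at what equality forces. Since $q=p-1<n/2$, \cite[Proposition~2.5(a)]{PW} applied to $\R-\frac1{n-q}(\sum_{j=1}^{n-q}\mu_j)\Id$ gives
\[
\langle\mathcal Q(\alpha),\alpha\rangle\geq q\Bigl(\sum_{j=1}^{n-q}\mu_j\Bigr)|\alpha|^2=(p-1)\Bigl(\sum_{j=1}^{n-p+1}\mu_j\Bigr)|\alpha|^2 .
\]
By \eqref{eq:ordering} with $k=n-p+1$, we have $\sum_{j=1}^{n-p+1}\mu_j\geq n-p+\mu_{n-p+1}\geq n-p+1$. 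The hypothesis therefore forces, at every point where $\alpha\neq0$,
\[
\sum_{j=1}^{n-p+1}\mu_j=n-p+1 .
\]
Because this is a chain of inequalities, each one must be an equality. Hence $\sum_{j=1}^{n-p}\mu_j=n-p$ and $\mu_{n-p+1}=1$ on the dense set where $\alpha\ne0$.

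Next we convert these equalities into eigenvalue information. Ordering gives $\mu_{n-p}\le\mu_{n-p+1}=1$, while \eqref{eq:ordering} gives $\mu_{n-p}\geq1$, so $\mu_{n-p}=1$. We then want to show $\mu_1\ge1$. The equality $\sum_{j=1}^{n-p}\mu_j=n-p$ with $\mu_{n-p}=1$ only says that the average of $\mu_1,\dots,\mu_{n-p-1}$ equals $1$. It does not by itself give $\mu_1\geq1$; something beyond the sum constraint is needed.

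This is the main obstacle. The plan is to extract more from equality in \cite[Lemmas~2.1--2.2]{PW}. There, $\langle\mathcal Q(\alpha),\alpha\rangle$ is written as $\sum_\alpha\lambda_\alpha|\Xi_\alpha\alpha|^2$ over an eigenbasis $\Xi_\alpha$ of $\R$. The lower bound follows from the weight estimate $|\Xi_\alpha\alpha|^2\le\frac{1}{n-q}\sum_\beta|\Xi_\beta\alpha|^2$, that is, each weight is at most $\frac1{n-q}$ of the total, together with the identity $\sum_\beta|\Xi_\beta\alpha|^2=q(n-q)|\alpha|^2$. Equality in the resulting "sum of the lowest $n-q$ eigenvalues" bound (a Ky Fan-type argument) forces two things: the weights on the eigenvalues strictly below $\mu_{n-q}$ are saturated at $\frac1{n-q}$ of the total, and the weights on eigenvalues strictly above vanish. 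We have shown that the $(n-q)$-th eigenvalue $\mu_{n-p+1}$ equals $1$. If some $\mu_1<1$, write $\R'=\R-\Id$, whose $(n-q)$-th eigenvalue is $0$. Equality in the Ky Fan argument then gives two conditions: the weights of $\alpha$ on all eigenvalues not equal to $1$ are extremal, and the weighted sum $\sum_\alpha\mu_\alpha'w_\alpha=0$ must be achieved by weights $w_\alpha\le\frac1{n-q}$ summing to $1$. With $\mu'_{n-q}=0$, the minimum of $\sum\mu'_\alpha w_\alpha$ under these constraints is $\frac1{n-q}\sum_{\mu'_\alpha<0}\mu'_\alpha$, which is $<0$ if any $\mu'_\alpha<0$. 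The hypothesis forces $\sum\mu'_\alpha w_\alpha=0$ on the nose, while the minimum is strictly negative. So we must check that $\sum_{j\le n-q}\mu'_j=0$, together with the $\alpha$-dependence of the weights, rules out negative $\mu'$.

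The cleanest route to that check is to apply the PW inequality directly to $\R'=\R-\Id$. Since $\mathcal Q$ is linear in $\R$ and $\mathcal Q_{\Id}=q(n-q)$ on $q$-forms, the hypothesis becomes $\langle\mathcal Q_{\R'}(\alpha),\alpha\rangle=0$. We will then argue that $\R'$ has at most $n-q-1$ negative eigenvalues, which are compensated by $\mu'_{n-q}=0$ and the nonnegative ones. The expected conclusion is that saturation of all negative eigenvalue weights at $\frac1{n-q}$ makes the pairing strictly negative unless no $\mu'_j<0$. This yields $\mu_1\ge1$ on a dense set, and continuity of $\mu_1$ extends $\R\geq\Id$ to all of $M$.
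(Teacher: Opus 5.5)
Your first step is the paper's proof exactly. At points where $\alpha\ne0$, the chain
\[
n-p+1=\frac{\langle\mathcal Q(\alpha),\alpha\rangle}{(p-1)|\alpha|^2}\geq\sum_{j=1}^{n-p+1}\mu_j\geq n-p+\mu_{n-p+1}\geq n-p+1
\]
forces $\mu_{n-p+1}=1$ and $\sum_{j=1}^{n-p}\mu_j=n-p$.

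The gap is your assertion that this \emph{does not by itself give} $\mu_1\geq1$. It does, by the ordering you already used. For every $j\leq n-p$ we have $\mu_j\leq\mu_{n-p+1}=1$, and $n-p$ numbers, each at most $1$, whose sum is $n-p$ must all equal $1$. Hence $\mu_1=\cdots=\mu_{n-p+1}=1$, so $\R\geq\Id$ wherever $\alpha\neq0$. Density and continuity of the smallest eigenvalue then extend this to all of $M$. That one observation is all that remains of the paper's argument after the chain above.

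Because you dismissed it, your proposal never actually proves $\mu_1\geq1$. The second half is an unexecuted plan (\emph{we will then argue}, \emph{the expected conclusion}), and the mechanism it describes is not valid as stated. Having $\langle\mathcal Q_{\R'}(\alpha),\alpha\rangle=0$ while the minimum of $\sum_\alpha\mu'_\alpha w_\alpha$ over admissible weights is strictly negative is no contradiction, since the actual value may exceed the minimum.

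Moreover, that minimum is exactly the Petersen--Wink bound $\frac1{n-q}\sum_{j\leq n-q}\mu'_j$, which here already equals $0$. Comparing it with your formula $\frac1{n-q}\sum_{\mu'_\alpha<0}\mu'_\alpha$ does produce a contradiction. But that comparison is just the ordering argument in disguise: $\mu'_j\leq\mu'_{n-q}=0$ for $j\leq n-q$, together with $\sum_{j\leq n-q}\mu'_j=0$. The form $\alpha$, the weights $|\Xi_\alpha\alpha|^2$, and any equality analysis in the Petersen--Wink lemmas play no role.
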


\begin{proof}
At every point where $\alpha\ne0$, \eqref{eq:PW} in degree $p-1$ and \eqref{eq:ordering} give
\[
\begin{aligned}
n-p+1=\frac{\langle\mathcal Q(\alpha),\alpha\rangle}{(p-1)|\alpha|^2}\geq\sum_{j=1}^{n-p+1}\mu_j\geq n-p+\mu_{n-p+1}\geq n-p+1.
\end{aligned}
\]
Thus $\mu_{n-p+1}=1$. Since $\mu_j\leq1$ for $j\leq n-p$ and $\sum_{j=1}^{n-p}\mu_j\geq n-p$, we have $\mu_1=\cdots=\mu_{n-p+1}=1$. Hence $\R\geq\Id$ wherever $\alpha\ne0$, and therefore on all of $M$ by continuity.
\end{proof}

\begin{proposition}\label{prop:closed-pair}
Under the hypotheses of Theorem~\ref{thm:closed}, suppose that $p\geq2$ and $\lambda_1^{(p)}(M,g)=p(n-p+1)$. Then $\R\geq\Id$, and the universal cover admits nonzero forms $\alpha\in\Omega^{p-1}(\widetilde M)$ and $\omega\in\Omega^p(\widetilde M)$ satisfying
\[
\nabla_X\alpha=\frac1p\iota_X\omega,\qquad \nabla_X\omega=-pX^\flat\wedge\alpha
\]
for every vector field $X$.
\end{proposition}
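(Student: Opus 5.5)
Let $\omega$ be an eigenform realizing $\lambda_1^{(p)}=p(n-p+1)=:\lambda$. By the equality discussion after \eqref{eq:spectral-bound}, $P\omega=0$ and $\mathcal Q(\omega)=p(n-p)\omega$.

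\textbf{Case $2p<n$.} Here also $\dd\omega=0$, so $\DeltaH\omega=\dd\dstar\omega=\lambda\omega$. Set $\alpha=\lambda^{-1}\dstar\omega$; then $\dd\alpha=\omega$, $\dstar\alpha=0$, and $\alpha\ne0$, since otherwise $\omega=\dd\alpha=0$. Moreover $\DeltaH\alpha=\lambda^{-1}\dstar\dd\dstar\omega=\lambda^{-1}\dstar(\lambda\omega)=\lambda\alpha$. Apply \eqref{eq:spectral-bound}-type reasoning in degree $p-1$: integrate the identity of Lemma~\ref{lem:integral} (empty boundary) with $\dstar\alpha=0$ and $\|\dd\alpha\|^2=\lambda\|\alpha\|^2$, giving
\[
\frac{p-1}{p}\lambda\|\alpha\|^2=\|P\alpha\|^2+\int_M\langle\mathcal Q(\alpha),\alpha\rangle .
\]
By \eqref{eq:PW} in degree $p-1$ and \eqref{eq:ordering} with $k=n-p+1$, we have $\langle\mathcal Q(\alpha),\alpha\rangle\ge(p-1)(n-p+1)|\alpha|^2$. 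The left side is exactly $(p-1)(n-p+1)\|\alpha\|^2$, so $P\alpha=0$ and pointwise equality $\langle\mathcal Q(\alpha),\alpha\rangle=(p-1)(n-p+1)|\alpha|^2$ holds. Then $P\alpha=0$ with $\dstar\alpha=0$ gives $\nabla_X\alpha=\frac1p\iota_X\dd\alpha=\frac1p\iota_X\omega$. Since $\alpha$ is conformal Killing, unique continuation (conformal Killing forms are determined by their 1-jet data along curves, being parallel sections of a prolonged connection) shows that $\alpha$ is nonzero on a dense open set. Lemma~\ref{lem:curvature-equality} then yields $\R\ge\Id$, so $\R$ has all eigenvalues $\geq 1$.

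The second equation comes from $\omega=p\,\nabla\alpha$-type identities. Differentiate: $\nabla_X\omega=p\nabla_X\nabla\alpha$. Since $\alpha$ is a Killing form, the standard identity $\nabla^2_{X,Y}\alpha=-\frac1{p}R(X,Y)$-type contraction expresses $\nabla_X\dd\alpha$ through curvature acting on $\alpha$. Alternatively, and I would try this first, use $P\omega=0$ with $\dd\omega=0$ and $\dstar\omega=\lambda\alpha$, which gives directly
\[
\nabla_X\omega=-\frac{1}{n-p+1}X^\flat\wedge\dstar\omega=-pX^\flat\wedge\alpha.
\]
This is clean and needs no curvature. No universal cover is needed in this case.

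\textbf{Case $2p=n$.} Now $\dd\omega$ need not vanish. Decompose $\omega=\omega_1+\omega_2$ with $\omega_1=\lambda^{-1}\dd\dstar\omega$ closed and $\omega_2=\lambda^{-1}\dstar\dd\omega$ coclosed; there are no harmonic parts, by Petersen--Wink. Both are eigenforms with eigenvalue $\lambda$, hence both satisfy the equality conditions. If $\omega_1\ne0$, run the argument above. If $\omega_1=0$, then $\omega_2\ne0$, and $\ast\omega_2$ is a closed $p$-form eigenform, since $n-p=p$ and $\ast$ commutes with $\DeltaH$ and preserves the Rayleigh quotient. This reduces to the closed case. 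Orientability may fail, which is why the statement passes to the universal cover: pull everything back to $\widetilde M$, which is orientable, where $\ast$ is available and all local identities persist.

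The main obstacle is the density of $\{\alpha\ne0\}$ needed in Lemma~\ref{lem:curvature-equality}. The plan handles it via the prolongation: the pair $(\alpha,\omega)$ satisfies the closed first-order linear system above, so it is parallel for a connection on $\Lambda^{p-1}\oplus\Lambda^p$. Hence if $\alpha$ vanished on an open set, then $\omega=\dd\alpha$ would too, and the pair would vanish identically by connectedness.
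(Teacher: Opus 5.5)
Your proposal is correct and is essentially the paper's proof. It reduces to a closed $p(n-p+1)$-eigenform (via $\dd\dstar\omega$, or $\star$ of the coclosed part when $n=2p$), takes the coclosed primitive $\alpha=[p(n-p+1)]^{-1}\dstar\omega$, forces equality in Lemma~\ref{lem:integral} and \eqref{eq:PW} in degree $p-1$, and gets density of $\{\alpha\ne0\}$ from the first-order system; the paper phrases that last step as constancy of $p^2|\alpha|^2+|\omega|^2$, which does the same job as your parallel-section argument.

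The one detail you should add concerns the nonorientable middle-degree case. There your integral arguments on $\widetilde M$ (Green's formula, \eqref{eq:spectral-bound}, Lemma~\ref{lem:integral}) require $\widetilde M$ to be compact. The paper gets this from Bonnet--Myers, since \eqref{eq:PW} in degree one gives $\Ric\geq(n-1)g$; alternatively, you could work on the orientation double cover, which is automatically compact.
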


\begin{proof}
Since $\Ric\geq(n-1)g$, the universal cover $\widetilde M$ is compact by Bonnet--Myers. Let $\omega$ be the pullback of a nonzero eigen-$p$-form on $M$ with eigenvalue $p(n-p+1)$. If $2p<n$, then $\dd\omega=0$ by \eqref{eq:spectral-bound}. Suppose now that $n=2p$, and choose an orientation of $\widetilde M$. If $\dd\dstar\omega\ne0$, then
\[
\dd(\dd\dstar\omega)=0,\qquad \DeltaH(\dd\dstar\omega)=\dd\dstar\DeltaH\omega=p(p+1)\dd\dstar\omega,
\]
and we replace $\omega$ by $[p(p+1)]^{-1}\dd\dstar\omega$. Otherwise, Green's formula and $\DeltaH\star=\star\DeltaH$ give
\[
\begin{aligned}
\|\dstar\omega\|^2&=\int_{\widetilde M}\langle\dd\dstar\omega,\omega\rangle=0,\\
\dd(\star\omega)&=(-1)^p\star\dstar\omega=0,\\
\DeltaH(\star\omega)&=\star\DeltaH\omega=p(p+1)\star\omega.
\end{aligned}
\]
In this case we replace $\omega$ by $\star\omega$. Thus in every case we may assume
\[
\omega\ne0,\qquad \dd\omega=0,\qquad \DeltaH\omega=p(n-p+1)\omega.
\]
Applying \eqref{eq:spectral-bound} to $\omega$ gives $P\omega=0$. Set $\alpha=[p(n-p+1)]^{-1}\dstar\omega$. Then
\begin{equation}\label{eq:primitive}
\begin{aligned}
\dd\alpha&=\frac{\dd\dstar\omega}{p(n-p+1)}=\frac{\DeltaH\omega}{p(n-p+1)}=\omega,\qquad \dstar\alpha=0,\\
\nabla_X\omega&=-\frac1{n-p+1}X^\flat\wedge\dstar\omega=-pX^\flat\wedge\alpha.
\end{aligned}
\end{equation}
Moreover, $\DeltaH\alpha=p(n-p+1)\alpha$. Green's formula and Lemma~\ref{lem:integral} in degree $p-1$ give
\[
\begin{aligned}
0&=\frac{p-1}{p}\int_{\widetilde M}\langle\DeltaH\alpha-p(n-p+1)\alpha,\alpha\rangle\\
&=\frac{p-1}{p}\|\dd\alpha\|^2-(p-1)(n-p+1)\|\alpha\|^2\\
&=\|P\alpha\|^2+\int_{\widetilde M}\left(\langle\mathcal Q(\alpha),\alpha\rangle-(p-1)(n-p+1)|\alpha|^2\right)\\
&\geq0,
\end{aligned}
\]
where the last inequality follows from \eqref{eq:PW} in degree $p-1$ and \eqref{eq:ordering}. Hence
\[
\nabla_X\alpha=\frac1p\iota_X\omega,\qquad \langle\mathcal Q(\alpha),\alpha\rangle=(p-1)(n-p+1)|\alpha|^2.
\]
Together with \eqref{eq:primitive}, this proves the asserted system. It also gives
\begin{equation}\label{eq:closed-pair-norm}
X\bigl(p^2|\alpha|^2+|\omega|^2\bigr)=2p\langle\iota_X\omega,\alpha\rangle-2p\langle X^\flat\wedge\alpha,\omega\rangle=0.
\end{equation}
Thus $p^2|\alpha|^2+|\omega|^2$ is a positive constant. If $\alpha$ vanished on a nonempty open set, then $\omega=\dd\alpha$ would vanish there as well, contradicting this positivity. Hence $\alpha$ is nonzero on a dense subset of $\widetilde M$, and Lemma~\ref{lem:curvature-equality} gives $\R\geq\Id$.
\end{proof}

\begin{lemma}\label{lem:absolute-boundary}
Let $\omega$ be a smooth absolute eigen-$p$-form with eigenvalue $\lambda$. Then
\[
\DeltaH\omega=\lambda\omega\quad\text{in }M,\qquad \iota_\nu\omega=\iota_\nu\dd\omega=0\quad\text{on }\Sigma.
\]
Moreover, $\dd\omega$ and $\dstar\omega$, whenever nonzero, are absolute eigenforms with eigenvalue $\lambda$ in their respective degrees.
\end{lemma}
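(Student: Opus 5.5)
The plan is to derive the natural boundary condition from the variational characterization, and then to check that $\dd$ and $\dstar$ preserve it. Let $\omega$ be a minimizer of the Rayleigh quotient defining $\lambda_{1,\mathrm{abs}}^{(p)}$, or more generally a smooth critical point of the quadratic form
\[
D(\eta,\eta)=\|\dd\eta\|^2+\|\dstar\eta\|^2
\]
on the space $\{\eta:\iota_\nu\eta=0\}$. Here ``absolute eigenform'' means that $D(\omega,\eta)=\lambda\langle\omega,\eta\rangle_{L^2}$ for all admissible $\eta$.

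The first step uses Green's formulas:
\[
\int_M\langle\dd\omega,\dd\eta\rangle=\int_M\langle\dstar\dd\omega,\eta\rangle+\int_\Sigma\langle\eta,\iota_\nu\dd\omega\rangle,
\]
\[
\int_M\langle\dstar\omega,\dstar\eta\rangle=\int_M\langle\dd\dstar\omega,\eta\rangle-\int_\Sigma\langle\iota_\nu\eta,\dstar\omega\rangle.
\]
The second boundary term vanishes because $\iota_\nu\eta=0$. Testing against $\eta$ compactly supported in the interior gives $\DeltaH\omega=\lambda\omega$. The remaining identity is
\[
\int_\Sigma\langle\eta,\iota_\nu\dd\omega\rangle=0
\]
for every tangential $\eta$. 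Its boundary values $\eta|_\Sigma$ range over all tangential $p$-forms, and $\iota_\nu\dd\omega$ is itself tangential. Hence $\iota_\nu\dd\omega=0$ on $\Sigma$, and $\iota_\nu\omega=0$ holds by admissibility. This step requires smoothness of $\omega$ up to the boundary, which I will take from standard elliptic regularity for this coercive (Gaffney) boundary problem.

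The second step treats $\theta=\dd\omega$. It satisfies $\DeltaH\theta=\dd\DeltaH\omega=\lambda\theta$ and $\iota_\nu\theta=0$. Also $\dd\theta=0$, so $\iota_\nu\dd\theta=0$ trivially. For $\beta=\dstar\omega$, we have $\DeltaH\beta=\lambda\beta$ and $\dstar\beta=0$. The remaining conditions are $\iota_\nu\beta=0$ and $\iota_\nu\dd\beta=0$.

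The first of these I plan to get by working in a collar with $\nu$ extended by geodesics. There $\star\iota_\nu$ is, up to sign, $\nu^\flat\wedge\star$, and $\iota_\nu\dstar\omega$ equals, up to sign, $\dstar_\Sigma(\iota_\nu\omega)$ restricted to $\Sigma$. Equivalently, by duality with relative conditions, the pullback of $\dd(\star\omega)$ to $\Sigma$ is $\dd_\Sigma$ of the pullback of $\star\omega$. Since $\iota_\nu\omega=0$ means that the pullback of $\star\omega$ vanishes, this gives $\iota_\nu\dstar\omega=0$. For the second condition, note
\[
\dd\beta=\dd\dstar\omega=\lambda\omega-\dstar\dd\omega.
\]
Then $\iota_\nu\omega=0$, and $\iota_\nu\dstar\dd\omega=0$ by the same tangential-commutation argument applied to $\dd\omega$, which satisfies $\iota_\nu\dd\omega=0$. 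Finally, I plan to verify the weak eigen-identity $D(\theta,\eta)=\lambda\langle\theta,\eta\rangle$, and likewise for $\beta$, by reversing the Green's formula computation of the first step using these boundary conditions.

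I expect the main obstacle to be the boundary commutation identity $\iota_\nu\dstar\omega=\pm\dstar_\Sigma(\iota_\nu\omega)$ on $\Sigma$, which is where the sign bookkeeping has to be done carefully. I would prove it via Hodge star duality with the elementary fact that the pullback commutes with $\dd$.
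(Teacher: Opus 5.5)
Your proposal is correct. Its architecture is the same as the paper's: the first variation plus Green's formula gives the natural condition $\iota_\nu\dd\omega=0$, and the identity $\dd\dstar\omega=\lambda\omega-\dstar\dd\omega$ then reduces everything to one implication, $\iota_\nu\xi=0\Rightarrow\iota_\nu\dstar\xi=0$ on $\Sigma$, applied to $\xi=\omega$ and $\xi=\dd\omega$.

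The genuine difference is how you prove that implication.

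\emph{The paper's route.} It differentiates the boundary condition along $\Sigma$ in a principal frame, getting $\iota_\nu\nabla_{e_a}\xi=-\kappa_a\iota_{e_a}\xi$. It then writes $\iota_\nu\dstar\xi=\sum_a\iota_{e_a}\iota_\nu\nabla_{e_a}\xi=-\sum_a\kappa_a\iota_{e_a}^{\,2}\xi=0$. The normal-derivative term drops because $\iota_\nu^2=0$, and each shape-operator term vanishes because $\iota_{e_a}^2=0$.

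\emph{Your route.} You use the pointwise identity $j^*(\star\xi)=\pm\star_\Sigma j^*(\iota_\nu\xi)$, with $j:\Sigma\hookrightarrow M$ the inclusion, together with the naturality $j^*\dd=\dd_\Sigma j^*$. This yields $\iota_\nu\dstar\xi=\pm\dstar_\Sigma(\iota_\nu\xi)$ on $\Sigma$ with no curvature terms at all. Only a local orientation is needed, which is harmless because the claim is local.

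\emph{What each buys.} Your argument explains structurally why the second fundamental form cannot appear: it is the standard duality between absolute and relative conditions. The paper's argument is self-contained, and it sets up the computation $\iota_\nu\nabla_{e_a}\omega=-\kappa_a\iota_{e_a}\omega$ that it reuses in the proof of Proposition~\ref{prop:boundary-pair}.

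Two smaller remarks. Your final step, checking the weak identity $D(\theta,\eta)=\lambda\langle\theta,\eta\rangle_{L^2}$ by running Green's formula backwards, is something the paper leaves implicit. It is what makes the lemma safely iterable, as when the paper passes from $\omega$ to $\dstar\dd\omega$ in Proposition~\ref{prop:boundary-pair}. The collar extension of $\nu$ by geodesics is unnecessary: your argument only uses pointwise algebra on $\Sigma$ and the naturality of $\dd$.
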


\begin{proof}
By the variational definition of the absolute Hodge Laplacian, $\iota_\nu\omega=0$, and for every smooth $p$-form $\psi$ with $\iota_\nu\psi=0$, Green's formula gives
\[
\begin{aligned}
0&=\left.\frac{d}{dt}\right|_{t=0}\frac{\|\dd(\omega+t\psi)\|^2+\|\dstar(\omega+t\psi)\|^2}{\|\omega+t\psi\|^2}\\
&=\frac{2}{\|\omega\|^2}\int_M\left(\langle\dd\omega,\dd\psi\rangle+\langle\dstar\omega,\dstar\psi\rangle-\lambda\langle\omega,\psi\rangle\right)\\
&=\frac{2}{\|\omega\|^2}\left[\int_M\langle\DeltaH\omega-\lambda\omega,\psi\rangle+\int_\Sigma\langle\iota_\nu\dd\omega,\psi\rangle\right].
\end{aligned}
\]
Thus $\DeltaH\omega=\lambda\omega$ in $M$ and $\iota_\nu\dd\omega=0$ on $\Sigma$.

At a boundary point, choose an orthonormal frame $e_1,\ldots,e_{n-1}$ with $A(e_a,e_b)=\kappa_a\delta_{ab}$. Since $\nabla_{e_a}\nu=\kappa_a e_a$, differentiating $\iota_\nu\omega=\iota_\nu\dd\omega=0$ along $\Sigma$ gives
\[
\begin{aligned}
\iota_\nu\nabla_{e_a}\omega&=-\iota_{\nabla_{e_a}\nu}\omega=-\kappa_a\iota_{e_a}\omega,\\
\iota_\nu\nabla_{e_a}\dd\omega&=-\iota_{\nabla_{e_a}\nu}\dd\omega=-\kappa_a\iota_{e_a}\dd\omega.
\end{aligned}
\]
Using $\dstar=-\sum_{a=1}^{n-1}\iota_{e_a}\nabla_{e_a}-\iota_\nu\nabla_\nu$, $\iota_\nu\iota_{e_a}=-\iota_{e_a}\iota_\nu$, and $\iota_\nu^2=\iota_{e_a}^2=0$, we obtain
\[
\begin{aligned}
\iota_\nu\dstar\omega&=\sum_{a=1}^{n-1}\iota_{e_a}\iota_\nu\nabla_{e_a}\omega=-\sum_{a=1}^{n-1}\kappa_a\iota_{e_a}^{\,2}\omega=0,\\
\iota_\nu\dstar\dd\omega&=\sum_{a=1}^{n-1}\iota_{e_a}\iota_\nu\nabla_{e_a}\dd\omega=-\sum_{a=1}^{n-1}\kappa_a\iota_{e_a}^{\,2}\dd\omega=0.
\end{aligned}
\]
Consequently,
\[
\iota_\nu\dd\dstar\omega=\lambda\iota_\nu\omega-\iota_\nu\dstar\dd\omega=0.
\]
Since $\DeltaH$ commutes with $\dd$ and $\dstar$, the forms $\dd\omega$ and $\dstar\omega$ satisfy the required eigenvalue equations and boundary conditions.
\end{proof}

\begin{proposition}\label{prop:boundary-pair}
Under the hypotheses of Theorem~\ref{thm:boundary}, suppose that $p\geq2$ and  $\lambda_{1,\mathrm{abs}}^{(p)}(M,g)=p(n-p+1)$. Then $\R\geq\Id$, $A\geq0$, and one of the following holds.
\begin{enumerate}
\renewcommand{\labelenumi}{(\roman{enumi})}
\item There are nonzero forms $\alpha\in\Omega^{p-1}(M)$ and $\omega\in\Omega^p(M)$ with $\dd\alpha=\omega$ on $M$ and $\iota_\nu\alpha=\iota_\nu\omega=0$ on $\Sigma$, satisfying
\[
\nabla_X\alpha=\frac1p\iota_X\omega,\qquad \nabla_X\omega=-pX^\flat\wedge\alpha
\]
for every vector field $X$.
\item We have $n=2p$, and there is a nonzero form $\omega\in\Omega^p(M)$ with $\iota_\nu\omega=\iota_\nu\dd\omega=0$ on $\Sigma$ and
\[
\nabla_X\omega=\frac1{p+1}\iota_X\dd\omega,\qquad \nabla_X\dd\omega=-(p+1)X^\flat\wedge\omega
\]
for every vector field $X$.
\end{enumerate}
\end{proposition}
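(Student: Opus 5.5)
We follow the proof of Proposition~\ref{prop:closed-pair}, replacing Green's formula on a closed manifold by Lemma~\ref{lem:integral} and Lemma~\ref{lem:absolute-boundary}, and keeping track of the boundary integrals. Let $\omega$ be an absolute eigen-$p$-form with eigenvalue $p(n-p+1)$. Equality in \eqref{eq:spectral-bound} gives $P\omega=0$ and $\mathcal Q(\omega)=p(n-p)\omega$. It also makes the boundary term of Lemma~\ref{lem:integral} vanish. Since weak $p$-convexity makes every sum $\kappa_{i_1}+\cdots+\kappa_{i_p}\geq0$, each summand vanishes pointwise on $\Sigma$.

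If $2p<n$, then $\dd\omega=0$. Set $\alpha=[p(n-p+1)]^{-1}\dstar\omega$. Then $\dd\alpha=\omega$, $\dstar\alpha=0$, and $\nabla_X\omega=-pX^\flat\wedge\alpha$ exactly as in \eqref{eq:primitive}. By Lemma~\ref{lem:absolute-boundary}, $\alpha$ is an absolute eigenform, so $\iota_\nu\alpha=0$.

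If $n=2p$, we split according to whether $\dstar\dd\omega$ vanishes.
\begin{itemize}
\item If $\dd\dstar\omega\ne0$, we replace $\omega$ by $[p(p+1)]^{-1}\dd\dstar\omega$. This form is closed and absolute by Lemma~\ref{lem:absolute-boundary}, and we proceed as before toward alternative (i).
\item Otherwise $\dd\dstar\omega=0$. Using $\iota_\nu\omega=0$ in Green's formula gives $\|\dstar\omega\|^2=\int_M\langle \dd\dstar\omega,\omega\rangle=0$. So $\omega$ is coclosed, $\DeltaH\omega=\dstar\dd\omega$, and $P\omega=0$ reads $\nabla_X\omega=\frac1{p+1}\iota_X\dd\omega$. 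Put $\beta=[p(p+1)]^{-1}\dd\omega$, a closed absolute $(p+1)$-form with $\dstar\beta=\omega$.
\end{itemize}
We avoid the Hodge star because $\star$ does not preserve absolute boundary conditions. In the coclosed case the target is alternative (ii).

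The next step applies Lemma~\ref{lem:integral} in the adjacent degree. In alternative (i) this is degree $p-1$ applied to $\alpha$; in alternative (ii) it is degree $p+1$ applied to $\dd\omega$. For $\alpha$ we get
\[
0=\frac{p-1}{p}\|\dd\alpha\|^2-(p-1)(n-p+1)\|\alpha\|^2=\|P\alpha\|^2+\int_M\bigl(\langle\mathcal Q(\alpha),\alpha\rangle-(p-1)(n-p+1)|\alpha|^2\bigr)+B_{p-1}(\alpha).
\]
The interior term is nonnegative by \eqref{eq:PW} and \eqref{eq:ordering}. The main obstacle is the boundary term $B_{p-1}(\alpha)$, whose coefficients are sums of only $p-1$ principal curvatures. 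Weak $p$-convexity does not make these nonnegative, so we must first prove $A\geq0$.

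To prove $A\geq0$, we extract information from the vanishing of the degree-$p$ boundary term together with the coupled system. Restrict $\omega$ to $\Sigma$ and use the equation $\nabla_X\omega=-pX^\flat\wedge\alpha$ with the boundary relation $\iota_\nu\nabla_{e_a}\omega=-\kappa_a\iota_{e_a}\omega$ from Lemma~\ref{lem:absolute-boundary}. Since $\iota_\nu\alpha=0$, contracting the system with $\nu$ gives $-p\,\iota_\nu(e_a^\flat\wedge\alpha)=0$. Hence $\kappa_a\iota_{e_a}\omega=0$ for every $a$.

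With $\omega=\dd\alpha$, a similar computation for $\alpha$ gives $\kappa_a\iota_{e_a}\alpha=\frac1p\iota_\nu\iota_{e_a}\omega$ up to sign, and the right-hand side is zero. Thus on $\Sigma$ both forms are annihilated by every principal direction with $\kappa_a\ne0$.

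We then argue as follows. If some $\kappa_a<0$, weak $p$-convexity forces many $\kappa_b>0$, since each $p$-fold sum containing $\kappa_a$ must be nonnegative. The forms then live on the small span of the zero-curvature directions, which has dimension less than $p$, so $\omega|_\Sigma=0$ and $\alpha|_\Sigma=0$. Since $\omega$ and $\alpha$ are parallel for the connection defined by the system, their vanishing at a single point forces them to vanish identically, a contradiction. Hence $A\geq0$. I expect this dimension count to be the delicate part, to be checked carefully.

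Once $A\geq0$ holds, every sum of $p-1$ principal curvatures is nonnegative, so $B_{p-1}\geq0$. Hence $P\alpha=0$, which gives $\nabla_X\alpha=\frac1p\iota_X\omega$, and $\langle\mathcal Q(\alpha),\alpha\rangle=(p-1)(n-p+1)|\alpha|^2$.

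As in \eqref{eq:closed-pair-norm}, $p^2|\alpha|^2+|\omega|^2$ is a positive constant. So $\alpha$ is nonzero on a dense set, and Lemma~\ref{lem:curvature-equality} gives $\R\geq\Id$.

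In alternative (ii) the same argument runs in degree $p+1$ applied to $\dd\omega$. We use $\dd(\dd\omega)=0$, the fact that $\iota_\nu\dd\omega=0$, and the middle-degree symmetry $n-(p+1)=p-1$. Here the curvature equality in degree $p+1$ again yields $\mu_1=\cdots=\mu_{n-p+1}=1$ via \eqref{eq:PW} and \eqref{eq:ordering}, applied to the $(p-1)$-fold complementary count.
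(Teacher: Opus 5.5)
There is a genuine gap in alternative (i): your proof of $A\geq0$ is circular. You correctly see that weak $p$-convexity does not control the degree-$(p-1)$ boundary term $B_{p-1}(\alpha)$. You then plan to prove $A\geq0$ first, so as to get $B_{p-1}(\alpha)\geq0$ and hence $P\alpha=0$. But your argument for $A\geq0$ uses $P\alpha=0$ twice:
\begin{itemize}
\item The relation $\kappa_a\iota_{e_a}\alpha=\pm\frac1p\iota_\nu\iota_{e_a}\omega$ requires $\nabla_{e_a}\alpha=\frac1p\iota_{e_a}\omega$.
\item The propagation step (vanishing at one point forces vanishing everywhere because $(\alpha,\omega)$ is parallel for the connection defined by the system) requires both equations of the system.
\end{itemize}
At that stage you only have $\dd\alpha=\omega$, $\dstar\alpha=0$ and $\nabla_X\omega=-pX^\flat\wedge\alpha$. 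This is not a closed first-order system for $(\alpha,\omega)$, so pointwise vanishing of $\omega$ at a boundary point gives no global information.

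The missing idea, which is how the paper proceeds, is to localize instead of proving $A\geq0$ up front. Let $U=\{x\in\Sigma:\kappa_1(x)<0\}$, an open subset of $\Sigma$.
\begin{itemize}
\item Your identity $\kappa_a\iota_{e_a}\omega=0$ uses only the $\omega$-equation and $\iota_\nu\alpha=0$, so it is legitimate. The dimension count then gives $\omega=0$ on $U$.
\item Since $U$ is open in $\Sigma$, the tangential derivatives $\nabla_{e_a}\omega$ vanish on $U$. Hence $e^a\wedge\alpha=0$ for all tangential $e_a$. Since $\alpha$ is tangential of degree $p-1<n-1$, this gives $\alpha=0$ on $U$. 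The paper phrases this through the conformal Killing equation: on $U$ one gets $\dstar\omega=-\iota_\nu\nabla_\nu\omega=\frac1{n-p+1}\dstar\omega$, so $\dstar\omega=0$.
\item The integrand of $B_{p-1}(\alpha)$ therefore vanishes on $U$. On $\Sigma\setminus U$ all principal curvatures are nonnegative, so the integrand is nonnegative there. This yields $P\alpha=0$ and the curvature equality.
\item Only then does the positive constant $p^2|\alpha|^2+|\omega|^2$, which vanishes on $U$, force $U=\emptyset$, i.e.\ $A\geq0$.
\end{itemize}

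Two smaller points:
\begin{itemize}
\item At a point with $\kappa_1<0$, weak $p$-convexity allows at most $p-2$ vanishing principal curvatures. Your bound of fewer than $p$ is not enough to kill a $(p-1)$-form.
\item In alternative (ii), the degree-$(p+1)$ boundary term is automatically nonnegative, since $\kappa_1+\cdots+\kappa_p\geq0$ forces $\kappa_{p+1}\geq\kappa_p\geq0$. There you can obtain $P(\dd\omega)=0$ first and then run your pointwise argument for $A\geq0$ without the localization.
\end{itemize}
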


\begin{proof}
Let $\omega$ be a nonzero absolute eigen-$p$-form with eigenvalue $p(n-p+1)$. If $\dd\omega\ne0$, then \eqref{eq:spectral-bound} gives $n=2p$, and Green's formula gives
\[
\int_M\langle\dstar\dd\omega,\omega\rangle=\|\dd\omega\|^2>0.
\]
In this case, Lemma~\ref{lem:absolute-boundary} shows that $\dstar\dd\omega$ is a nonzero coclosed absolute eigenform with the same eigenvalue, and we replace $\omega$ by it. Applying \eqref{eq:spectral-bound} to the chosen form, we obtain either
\[
\begin{aligned}
&\dd\omega=0,\qquad \nabla_X\omega+\frac1{n-p+1}X^\flat\wedge\dstar\omega=0,\\
\text{or}\quad &n=2p,\qquad \dstar\omega=0,\qquad \nabla_X\omega-\frac1{p+1}\iota_X\dd\omega=0.
\end{aligned}
\]

To apply Lemma~\ref{lem:integral} to $\dstar\omega$ and $\dd\omega$, we first show that their boundary integrals are nonnegative. Let $U=\{x\in\Sigma:\kappa_1(x)<0\}$. Lemma~\ref{lem:absolute-boundary} gives $\iota_\nu\omega=\iota_\nu\dd\omega=0$ on $\Sigma$. Moreover, $\dstar\omega$ is either zero or an absolute eigenform, so $\iota_\nu\dstar\omega=0$ there as well. At a boundary point, choose an orthonormal frame $e_1,\ldots,e_{n-1}$ with $A(e_a,e_b)=\kappa_a\delta_{ab}$. Since $\nabla_{e_a}\nu=\kappa_a e_a$, the conformal Killing equation gives
\[
\begin{aligned}
-\kappa_a\iota_{e_a}\omega=\iota_\nu\nabla_{e_a}\omega=-\frac1{p+1}\iota_{e_a}\iota_\nu\dd\omega+\frac1{n-p+1}e^a\wedge\iota_\nu\dstar\omega=0.
\end{aligned}
\]
If $\omega(e_{i_1},\ldots,e_{i_p})\ne0$ at a point of $U$, then $\kappa_{i_1}=\cdots=\kappa_{i_p}=0$. Hence $\kappa_p\leq0$ and $\kappa_1+\cdots+\kappa_p<0$, contrary to weak $p$-convexity. Thus $\omega=0$ on $U$, and $\nabla_{e_a}\omega=0$ there. The conformal Killing equation and $\iota_\nu\dd\omega=\iota_\nu\dstar\omega=0$ give on $U$
\[
\begin{aligned}
\nabla_\nu\omega&=-\frac1{n-p+1}\nu^\flat\wedge\dstar\omega,\\
\dstar\omega&=-\iota_\nu\nabla_\nu\omega=\frac1{n-p+1}\iota_\nu(\nu^\flat\wedge\dstar\omega)=\frac1{n-p+1}\dstar\omega.
\end{aligned}
\]
Hence $\dstar\omega=0$ and $\nabla_\nu\omega=0$, so $\nabla\omega=\dd\omega=0$ on $U$. The boundary integrals in Lemma~\ref{lem:integral} for $\dstar\omega$ and $\dd\omega$ are therefore nonnegative, since their integrands vanish on $U$ and all principal curvatures are nonnegative on $\Sigma\setminus U$.

Now, suppose first that $\dd\omega=0$, and set $\alpha=[p(n-p+1)]^{-1}\dstar\omega$. The eigenvalue equation and the first alternative give on $M$
\[
\begin{aligned}
\dd\alpha&=\frac{\dd\dstar\omega}{p(n-p+1)}=\frac{\DeltaH\omega}{p(n-p+1)}=\omega,\qquad \dstar\alpha=0,\\
\nabla_X\omega&=-\frac1{n-p+1}X^\flat\wedge\dstar\omega=-pX^\flat\wedge\alpha.
\end{aligned}
\]
In particular, $\alpha\ne0$. By Lemma~\ref{lem:absolute-boundary}, it is an absolute eigenform with the same eigenvalue, and
\[
\iota_\nu\alpha=\frac1{p(n-p+1)}\iota_\nu\dstar\omega=0\quad\text{on }\Sigma.
\]
Its boundary integral in Lemma~\ref{lem:integral} is nonnegative because $\alpha$ is a constant multiple of $\dstar\omega$. Thus Green's formula, Lemma~\ref{lem:integral} in degree $p-1$, and \eqref{eq:PW}--\eqref{eq:ordering} give 
\[
\begin{aligned}
0&=\frac{p-1}{p}\int_M\langle\DeltaH\alpha-p(n-p+1)\alpha,\alpha\rangle\\
&=\frac{p-1}{p}\|\dd\alpha\|^2-(p-1)(n-p+1)\|\alpha\|^2\\
&\geq\|P\alpha\|^2+\int_M\left(\langle\mathcal Q(\alpha),\alpha\rangle-(p-1)(n-p+1)|\alpha|^2\right)\\
&\geq0.
\end{aligned}
\]
Both nonnegative terms vanish, so on $M$
\[
\nabla_X\alpha=\frac1p\iota_X\omega,\qquad \langle\mathcal Q(\alpha),\alpha\rangle=(p-1)(n-p+1)|\alpha|^2,
\]
which proves the system in (i).

On the other hand, suppose now that $n=2p$ and $\dstar\omega=0$. The eigenvalue equation gives
\[
\dstar\dd\omega=p(p+1)\omega,
\]
so $\dd\omega\ne0$. Lemma~\ref{lem:absolute-boundary} gives $\DeltaH(\dd\omega)=p(p+1)\dd\omega$ on $M$ and $\iota_\nu\dd\omega=0$ on $\Sigma$. On each oriented coordinate neighborhood, $\star\dd\omega$ has degree $p-1$. By \eqref{eq:PW} and \eqref{eq:ordering},
\[
\begin{aligned}
\langle\mathcal Q(\dd\omega),\dd\omega\rangle=\langle\mathcal Q(\star\dd\omega),\star\dd\omega\rangle\geq(p-1)\left(\sum_{j=1}^{p+1}\mu_j\right)|\dd\omega|^2\geq(p^2-1)|\dd\omega|^2.
\end{aligned}
\]
Since $\dd\omega$ is closed and its boundary integral is nonnegative, Green's formula and Lemma~\ref{lem:integral} in degree $p+1$ give
\[
\begin{aligned}
0&=\frac{p-1}{p}\int_M\langle\DeltaH(\dd\omega)-p(p+1)\dd\omega,\dd\omega\rangle\\
&=\frac{p-1}{p}\|\dstar\dd\omega\|^2-(p^2-1)\|\dd\omega\|^2\\
&\geq\|P(\dd\omega)\|^2+\int_M\left(\langle\mathcal Q(\dd\omega),\dd\omega\rangle-(p^2-1)|\dd\omega|^2\right)\\
&\geq0.
\end{aligned}
\]
Both nonnegative terms vanish. Since $\dd\omega$ is closed, we obtain on $M$
\[
\nabla_X\dd\omega=-\frac1pX^\flat\wedge\dstar\dd\omega=-(p+1)X^\flat\wedge\omega,\qquad \langle\mathcal Q(\dd\omega),\dd\omega\rangle=(p^2-1)|\dd\omega|^2.
\]
This proves the system in (ii).

The two systems give, respectively,
\[
X\bigl(p^2|\alpha|^2+|\omega|^2\bigr)=0,\qquad X\bigl((p+1)^2|\omega|^2+|\dd\omega|^2\bigr)=0,
\]
as in \eqref{eq:closed-pair-norm}. Each quantity is a positive constant. On $U$, we have $\alpha=\omega=0$ in (i), and $\omega=\dd\omega=0$ in (ii). Hence $U$ is empty and $A\geq0$.

In (i), if $\alpha$ vanished on a nonempty open set, then $\omega=\dd\alpha$ would vanish there, contradicting $p^2|\alpha|^2+|\omega|^2>0$. Thus $\alpha$ is nonzero on a dense subset of $M$, and Lemma~\ref{lem:curvature-equality} gives $\R\geq\Id$.

In (ii), if $\dd\omega$ vanished on a nonempty open set, then $p(p+1)\omega=\dstar\dd\omega$ would vanish there, contradicting $(p+1)^2|\omega|^2+|\dd\omega|^2>0$. Thus $\dd\omega$ is nonzero on a dense subset of $M$. On each oriented coordinate neighborhood, $\star\dd\omega$ has degree $p-1$ and is nonzero on a dense subset, since $|\star\dd\omega|=|\dd\omega|$. Applying Lemma~\ref{lem:curvature-equality} there gives $\R\geq\Id$ on $M$.
\end{proof}

\section{Rigidity on closed manifolds}

\begin{proof}[Proof of Theorem~\ref{thm:closed}]
The eigenvalue estimate was proved in \eqref{eq:spectral-bound}. Suppose first that $p\geq2$ and equality holds. On the universal cover $\widetilde M$, equipped with the lifted metric, fix the nonzero forms $\alpha\in\Omega^{p-1}(\widetilde M)$ and $\omega\in\Omega^p(\widetilde M)$ supplied by Proposition~\ref{prop:closed-pair}. Then $\R\geq\Id$ and
\[
\nabla_X\alpha=\frac1p\iota_X\omega,\qquad \nabla_X\omega=-pX^\flat\wedge\alpha.
\]
Moreover, $\widetilde M$ is compact by Bonnet--Myers. In a local orthonormal frame $e_1,\ldots,e_n$, we have
\[
\dd\alpha=\frac1p\sum_j e^j\wedge\iota_{e_j}\omega=\omega,\qquad \dstar\alpha=-\frac1p\sum_j\iota_{e_j}^{\,2}\omega=0.
\]
Thus $\alpha$ is a special Killing $(p-1)$-form with constant $-p$ in the sense of \cite[Definition~4.1]{Semmelmann}.

By \cite[Theorem~4.8 and pp.~517--519]{Semmelmann}, $\widetilde M$ is the unit sphere, a Sasakian or $3$-Sasakian manifold, a strict nearly K\"ahler manifold of dimension six, or a weak $G_2$-manifold of dimension seven. In the Einstein--Sasakian, $3$-Sasakian, strict nearly K\"ahler and weak $G_2$ cases, the corresponding metric cone is Ricci-flat, and hence $\Ric=(n-1)g$. Thus
\[
\R-\Id\geq0,\qquad \operatorname{tr}(\R-\Id)=\frac12\bigl(\operatorname{Scal}-n(n-1)\bigr)=0,
\]
so $\R=\Id$ and $\widetilde M$ is the unit sphere. If $\widetilde M$ is not round, it is therefore Sasakian and $n$ is odd. In this remaining case, the special Killing forms have odd degree by the same classification. Since $\alpha$ has degree $p-1$, $p$ is even. Finally, since $\widetilde M$ is compact and simply connected with positive curvature operator, it follows from \cite[Theorem~1]{BW} that it is diffeomorphic to $S^n$.

Suppose now that $p=1$ and equality holds. Let $\omega$ be a nonzero eigenform on $M$ with $\DeltaH\omega=n\omega$. Since $n\geq3$, \eqref{eq:spectral-bound} gives $\dd\omega=0$ and $P\omega=0$. Let $f=n^{-1}\dstar\omega$. Then
\[
\begin{aligned}
\dd f&=\frac1n\dd\dstar\omega=\frac1n\DeltaH\omega=\omega,\\
\nabla^2f&=\nabla\omega=-\frac1n(\dstar\omega)g=-f g.
\end{aligned}
\]
Since $\omega\ne0$, the function $f$ is nonconstant. Obata's theorem \cite{Obata} gives that $M$ is the unit sphere, and hence $\R=\Id$.
\end{proof}

\section{Rigidity on manifolds with boundary}

For the following lemma, suppose that $p\geq2$ and equality holds in Theorem~\ref{thm:boundary}. Fix the forms supplied by Proposition~\ref{prop:boundary-pair}. Then $\R\geq\Id$ and $A\geq0$. By \cite[Theorem~2(ii)]{Wu}, $M$ is diffeomorphic to a closed ball. In particular, $M$ is simply connected, and $\Sigma$ is connected and simply connected since $n\geq4$.

\begin{lemma}\label{lem:boundary-obata}
If $\Sigma$ is not isometric to the unit sphere, then the first Dirichlet eigenvalue $\lambda_1^D(M)$ of the Laplacian on functions satisfies $\lambda_1^D(M)\leq n$.
\end{lemma}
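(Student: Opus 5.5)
Lemma~\ref{lem:boundary-obata} asks for a test function argument: we want a nonzero function vanishing on $\Sigma$ whose Rayleigh quotient is at most $n$. The natural candidate comes from the forms of Proposition~\ref{prop:boundary-pair}. In case (i), set $f=p^2|\alpha|^2+|\omega|^2$. This is a positive constant $c$, and we may normalize it to $c=1$. Consider instead the function $u=|\omega|^2-\frac{c}{p+1}$, or more generally a suitable combination $u=a|\alpha|^2+b|\omega|^2+e$. Using $\nabla_X\alpha=\frac1p\iota_X\omega$ and $\nabla_X\omega=-pX^\flat\wedge\alpha$, we compute
\[
\nabla_X|\alpha|^2=\tfrac2p\langle\iota_X\omega,\alpha\rangle,
\]
together with its Hessian. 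The aim is an Obata-type equation of the form $\nabla^2 u=-(c'-u)g+(\text{error})$. On round spheres, special Killing pairs arise from parallel forms on $\mathbb R^{n+1}$ restricted to the sphere, and the quantities $|\alpha|^2$ are quadratic in the coordinates. So $\Delta|\alpha|^2$ should be an affine function of $|\alpha|^2$ plus a term involving $\mathcal Q$ that is controlled by $\R\geq\Id$. I would compute
\[
\Delta|\alpha|^2=2|\nabla\alpha|^2-2\langle\nabla^*\nabla\alpha,\alpha\rangle,
\]
and express both terms via $|\omega|^2$, $|\alpha|^2$ and $\langle\mathcal Q(\alpha),\alpha\rangle=(p-1)(n-p+1)|\alpha|^2$. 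The result is a linear relation of the form $\Delta|\alpha|^2=k_1|\omega|^2-k_2|\alpha|^2$, which becomes an eigenfunction relation using $p^2|\alpha|^2+|\omega|^2=c$.

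Next we use the boundary. Since $\iota_\nu\alpha=\iota_\nu\omega=0$, the restrictions of $\alpha,\omega$ to $\Sigma$ form a special Killing-type pair on $\Sigma$, with an extra term coming from $A$. The idea is to produce a function vanishing on $\Sigma$, for instance $v=\langle\cdot\rangle$ built from the cone picture. Let $\hat\omega=r^{p}(\dd r\wedge\alpha)+\frac{r^{p+1}}{p}\omega$, roughly, be the parallel $p$-form on the metric cone over $M$ (the construction of \cite{Semmelmann,Bar}). Since $\nu$ is annihilated, $\iota_\nu\hat\omega=0$ along the cone boundary. Components of $\hat\omega$ against fixed directions give functions on $M$. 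The first-eigenfunction candidates are $u=\langle\hat\omega,\beta\rangle$ for a suitable parallel $\beta$, restricted to $r=1$. These satisfy $\Delta u=$ (eigenvalue) $u$ exactly when the cone is flat in the relevant directions. In general we use $\R\geq\Id$ to get the inequality $\Delta u+nu\geq0$ or $\leq0$ in the right sign.

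The key step, and the main obstacle, is producing such a $u$ that vanishes on $\Sigma$ and is nonzero exactly when $\Sigma$ is not a round unit sphere. My first attempt is the following. If $\Sigma$ is not round, then the pair restricted to $\Sigma$ cannot be of the form arising from a hemisphere with totally geodesic boundary. This should force a component $u=|\iota_\nu\text{-something}|$, or $u=\langle\omega,\nu^\flat\wedge\iota_{X}\cdot\rangle$ extended suitably, to be a nontrivial function with $u|_\Sigma=0$ and $-\Delta u\leq nu$ in the weak sense with $u\geq0$. Then the Rayleigh quotient $\int|\nabla u|^2/\int u^2\leq n$ gives $\lambda_1^D(M)\leq n$ by the variational characterization.

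Case (ii) is handled identically, with $(\omega,\dd\omega)$ in place of $(\alpha,\omega)$ and $p$ replaced by $p+1$. As a fallback if the cone construction is unwieldy, I would use the distance function $\rho$ to $\Sigma$ and the test function $\sin\rho$. From $\Ric\geq(n-1)g$ and $A\geq0$, Laplacian comparison gives $\Delta\sin\rho\geq -n\sin\rho$ only when the inradius is at least $\pi/2$. So the alternative route is to show that non-roundness of $\Sigma$ forces inradius $\ge\pi/2$ via the special Killing structure.
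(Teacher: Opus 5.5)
There is a genuine gap, and it sits exactly at the step you flag as the main obstacle. The proposal never produces a nonzero function vanishing on $\Sigma$ with Rayleigh quotient at most $n$, and the hypothesis that $\Sigma$ is not the unit sphere never enters in a usable way.

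The missing idea is to use the \emph{kernel} of the parallel cone form rather than its components. Take $\widehat\Phi=r^{p-1}\dd r\wedge\alpha+\frac{r^p}{p}\omega$; your form is $r\widehat\Phi$, which is not parallel. Since $\widehat\nabla\widehat\Phi=0$, the bundle $K=\{v:\iota_v\widehat\Phi=0\}$ is a parallel subbundle of $T\widehat M$. The boundary conditions put $\widehat\nu$ into $K$ along $\widehat\Sigma=(0,\infty)\times\Sigma$. Hence $K\cap T\widehat\Sigma$ has rank one less than $K$ and is parallel for the induced connection on the cone over $\Sigma$. Now $\Sigma$ is compact, simply connected (this uses that $M$ is a ball and $n\geq4$) and not the unit sphere. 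By Gallot's theorem (see \cite[Lemma~2]{Bar}) that cone therefore has irreducible holonomy. This is the one place non-roundness is used. It forces $K\cap T\widehat\Sigma=0$; the other option would make $\widehat\Phi$ vanish on the boundary, contradicting its constant positive length. Thus $K$ is a parallel line bundle equal to $\mathbb R\widehat\nu$ along the boundary.

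Set $V=\pi_K\partial_r$ at $r=1$. It satisfies $\widehat\nabla_XV=\pi_KX$ and $\widehat\nabla^2_{X,Y}V=-g(X,Y)V$, with $V\not\equiv0$ and $V|_\Sigma=0$. A regularization of $|V|$, together with $|\nabla|V||\leq|\widehat\nabla V|$ and an integration by parts, gives $\lambda_1^D(M)\leq n$. Equivalently, for a unit parallel section $N$ of $K$, the function $h=\langle N,\partial_r\rangle$ at $r=1$ solves $\nabla^2h=-hg$ with $h|_\Sigma=0$.

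Your concrete candidates cannot replace this.
\begin{itemize}
\item \textbf{Quadratic invariants.} The special Killing system gives $\nabla^*\nabla\alpha=(n-p+1)\alpha$ and $|\nabla\alpha|^2=\frac1p|\omega|^2$. With $p^2|\alpha|^2+|\omega|^2=c$ this yields $\Delta|\alpha|^2=\frac{2c}{p}-2(n+1)|\alpha|^2$, with no curvature term. So $|\alpha|^2-\frac{c}{p(n+1)}$ is an eigenfunction for $2(n+1)$, the eigenvalue of quadratic harmonics, not $n$. It also has no reason to vanish on $\Sigma$: on the hemisphere $|\alpha|^2=p^{-2}(x_1^2+\cdots+x_p^2)$, which does not vanish on the equator. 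A Dirichlet test function for $n$ must behave like a restricted linear function, which is why one needs a parallel line field on the cone.
\item \textbf{Pairing with a parallel $\beta$.} Pairing $\widehat\Phi$ with an unspecified parallel $\beta$ presupposes a second parallel object that you never construct. Nothing makes such a pairing vanish on $\Sigma$.
\item \textbf{The $\sin\rho$ fallback.} This fails by direction. From $\Ric\geq(n-1)g$ and $A\geq0$, comparison gives $\Delta\rho\leq-(n-1)\tan\rho$, hence $-\Delta\sin\rho\geq n\sin\rho$. By Barta's argument this yields Reilly's lower bound $\lambda_1^D\geq n$, not the upper bound you need. Moreover, these hypotheses already force inradius at most $\pi/2$, with equality only for the hemisphere (Kasue). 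So showing inradius $\geq\pi/2$ would amount to proving the rigidity theorem outright.
\end{itemize}
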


\begin{proof}
On the metric cone $(\widehat M,\widehat g)=((0,\infty)\times M,\dd r^2+r^2g)$, define
\[
\widehat\Phi=\begin{cases}
r^{p-1}\dd r\wedge\alpha+\dfrac{r^p}{p}\omega,&\text{if Proposition~\ref{prop:boundary-pair}(i) holds},\\[5pt]
r^p\dd r\wedge\omega+\dfrac{r^{p+1}}{p+1}\dd\omega,&\text{if Proposition~\ref{prop:boundary-pair}(ii) holds},
\end{cases}
\]
where the forms are pulled back from $M$. In alternative (i), $\dd\alpha=\omega$ and $\alpha$ is a special Killing $(p-1)$-form with constant $-p$. In alternative (ii), $\omega$ is a special Killing $p$-form with constant $-(p+1)$. Thus \cite[Lemma~4.3]{Semmelmann} gives $\widehat\nabla\widehat\Phi=0$ in both cases, up to the boundary.

For each $x\in\widehat M$, define
\[
K_x=\{v\in T_x\widehat M:\iota_v\widehat\Phi_x=0\}.
\]
We claim that $K$ is a parallel subbundle of $T\widehat M$. Let $\gamma:[0,1]\to\widehat M$ be a smooth curve, and let $Z$ be parallel along $\gamma$, with $Z(0)\in K_{\gamma(0)}$. Since $\widehat\nabla\widehat\Phi=0$, we have
\[
\begin{aligned}
\frac{\dd}{\dd t}|\iota_Z\widehat\Phi|^2&=2\left\langle\iota_{\widehat\nabla_{\dot\gamma}Z}\widehat\Phi+\iota_Z\widehat\nabla_{\dot\gamma}\widehat\Phi,\iota_Z\widehat\Phi\right\rangle=0,
\end{aligned}
\]
giving $|\iota_{Z(t)}\widehat\Phi_{\gamma(t)}|^2=|\iota_{Z(0)}\widehat\Phi_{\gamma(0)}|^2=0$. Thus $Z(t)\in K_{\gamma(t)}$. Applying this also to the reverse curve shows that parallel transport maps $K_{\gamma(0)}$ onto $K_{\gamma(1)}$. Hence the spaces $K_x$ have constant dimension and form a smooth subbundle $K$ of $T\widehat M$. For a smooth section $Z$ of $K$ and a vector field $X$, we have
\[
\iota_{\widehat\nabla_XZ}\widehat\Phi=\widehat\nabla_X(\iota_Z\widehat\Phi)-\iota_Z\widehat\nabla_X\widehat\Phi=0.
\]
Thus $\widehat\nabla_XZ$ takes values in $K$, which proves the claim.

Let $\widehat\Sigma=(0,\infty)\times\Sigma$ and let $\widehat\nu=r^{-1}\nu$ be its outward unit normal. Since $\dd r(\widehat\nu)=0$, the boundary conditions in Proposition~\ref{prop:boundary-pair} give
\[
\iota_{\widehat\nu}\widehat\Phi=\begin{cases}
-r^{p-2}\dd r\wedge\iota_\nu\alpha+\dfrac{r^{p-1}}p\iota_\nu\omega=0,&\text{if (i) holds},\\[5pt]
-r^{p-1}\dd r\wedge\iota_\nu\omega+\dfrac{r^p}{p+1}\iota_\nu\dd\omega=0,&\text{if (ii) holds}.
\end{cases}
\]
Thus $\widehat\nu(x)\in K_x$ for $x\in\widehat\Sigma$. For $v\in K_x$, we have
\[
\begin{aligned}
\left\langle v-\langle v,\widehat\nu\rangle\widehat\nu,\widehat\nu\right\rangle=0,\qquad
\iota_{v-\langle v,\widehat\nu\rangle\widehat\nu}\widehat\Phi=\iota_v\widehat\Phi-\langle v,\widehat\nu\rangle\iota_{\widehat\nu}\widehat\Phi=0.
\end{aligned}
\]
Hence $v-\langle v,\widehat\nu\rangle\widehat\nu\in K_x\cap T_x\widehat\Sigma$, and
\[
K_x=\mathbb R\widehat\nu(x)\oplus(K_x\cap T_x\widehat\Sigma)\quad(x\in\widehat\Sigma).
\]
In particular, $K\cap T\widehat\Sigma$ is a subbundle with $\operatorname{rank}(K\cap T\widehat\Sigma)=\operatorname{rank}K-1$. The induced Levi-Civita connection on $\widehat\Sigma$ is the tangential projection of $\widehat\nabla$. For vector fields $X,Z$ tangent to $\widehat\Sigma$, with $Z$ taking values in $K$, this gives
\[
\begin{aligned}
\iota_{\nabla_X^{\widehat\Sigma}Z}\widehat\Phi=\iota_{\widehat\nabla_XZ}\widehat\Phi-\langle\widehat\nabla_XZ,\widehat\nu\rangle\iota_{\widehat\nu}\widehat\Phi=\widehat\nabla_X(\iota_Z\widehat\Phi)-\iota_Z\widehat\nabla_X\widehat\Phi=0.
\end{aligned}
\]
Consequently, $\nabla_X^{\widehat\Sigma}Z$ also takes values in $K\cap T\widehat\Sigma$, so this subbundle is parallel for the induced connection.

Since $\Sigma$ is compact, simply connected and not the unit sphere, $\widehat\Sigma$ has irreducible holonomy by \cite[Lemma~2]{Bar}. Therefore its parallel tangent subbundle $K\cap T\widehat\Sigma$ is either zero or all of $T\widehat\Sigma$. The latter would give $K_x=T_x\widehat M$ on the boundary and hence $\widehat\Phi_x=0$, contrary to the constant positive length of $\widehat\Phi$. Thus
\[
K_x\cap T_x\widehat\Sigma=\{0\},\qquad K_x=\mathbb R\widehat\nu(x)\quad(x\in\widehat\Sigma).
\]
Since the rank of $K$ is constant, $K$ is a parallel line bundle on $\widehat M$.

Let $\pi_K$ be the orthogonal projection onto $K$, and define the vector field $V=\pi_K\partial_r$ on $\{1\}\times M$. Since the metric connection $\widehat\nabla$ preserves $K$ and $K^\perp$, we have $\widehat\nabla\pi_K=0$. For vector fields $X,Y$ tangent to $M$, the cone connection at $r=1$ gives
\[
\begin{aligned}
\widehat\nabla_XV&=\pi_K\widehat\nabla_X\partial_r=\pi_KX,\\
\widehat\nabla_X\widehat\nabla_YV-\widehat\nabla_{\nabla_XY}V&=\pi_K(\widehat\nabla_XY-\nabla_XY)=-g(X,Y)V.
\end{aligned}
\]
The vector field $V$ is not identically zero, since otherwise the first identity and $V=\pi_K\partial_r$ would give $\pi_K=0$. On $\Sigma$, we have $K=\mathbb R\nu$, so $V=\langle\partial_r,\nu\rangle\nu=0$.

Lastly, for $\varepsilon>0$, the function $f_\varepsilon=(|V|^2+\varepsilon^2)^{1/2}-\varepsilon$ is smooth up to the boundary and vanishes on $\Sigma$. In a local orthonormal frame $e_1,\ldots,e_n$ on $M$, Cauchy--Schwarz and integration by parts give
\[
\begin{aligned}
\lambda_1^D(M)\int_M f_\varepsilon^2&\leq\int_M|\nabla f_\varepsilon|^2\\
&=\int_M\sum_j\frac{\langle\widehat\nabla_{e_j}V,V\rangle^2}{|V|^2+\varepsilon^2}\\
&\leq\int_M\sum_j|\widehat\nabla_{e_j}V|^2\\
&=-\int_M\sum_j\left\langle\widehat\nabla_{e_j}\widehat\nabla_{e_j}V-\widehat\nabla_{\nabla_{e_j}e_j}V,V\right\rangle\\
&=n\int_M|V|^2.
\end{aligned}
\]
Since $0\leq|V|-f_\varepsilon\leq\varepsilon$ and $V\not\equiv0$, letting $\varepsilon\downarrow0$ proves the assertion.
\end{proof}

\begin{proof}[Proof of Theorem~\ref{thm:boundary}]
The eigenvalue estimate was proved in \eqref{eq:spectral-bound}. 

Suppose first that $p=1$ and equality holds. Let $\omega$ be a nonzero absolute eigen-$1$-form with $\DeltaH\omega=n\omega$. Since $n\geq3$, \eqref{eq:spectral-bound} gives $\dd\omega=0$ and $P\omega=0$. Set $f=n^{-1}\dstar\omega$. Then
\[
\begin{aligned}
\dd f&=\frac1n\dd\dstar\omega=\frac1n\DeltaH\omega=\omega,\\
\nabla^2f&=\nabla\omega=-\frac1n(\dstar\omega)g=-f g,\\
\partial_\nu f&=\iota_\nu\omega=0\quad\text{on }\Sigma.
\end{aligned}
\]
Since $\omega\ne0$, the function $f$ is nonconstant. The Neumann rigidity theorem in \cite{Escobar} implies that $M$ is the unit hemisphere.

Suppose now that $p\geq2$, equality holds, and $\Sigma$ is not the unit sphere. Proposition~\ref{prop:boundary-pair} gives $\Ric\geq(n-1)g$ and $A\geq0$. Reilly's estimate \cite{Reilly}, recalled in \cite[Theorem~5]{HW}, and Lemma~\ref{lem:boundary-obata} give
\[
n\leq\lambda_1^D(M)\leq n.
\]
The equality case of Reilly's theorem implies that $M$ is the unit hemisphere, a contradiction. Thus $\Sigma$ is the unit sphere, and \cite[Theorem~2]{HW} implies that $M$ is the unit hemisphere.

Conversely, let $j:S^n_+\hookrightarrow\mathbb R^{n+1}$ be the inclusion of the unit hemisphere $\{x_{n+1}\geq0\}$, and let
\[
\omega=j^*(\dd x_1\wedge\cdots\wedge\dd x_p),\qquad \alpha=\frac1p j^*\bigl(\iota_x(\dd x_1\wedge\cdots\wedge\dd x_p)\bigr),
\]
where $x$ is the position vector. The form $\omega$ is nonzero at the north pole and satisfies $\dd\omega=0$. Since $\nu=-e_{n+1}$ on the equator, $\iota_\nu\omega=\iota_\nu\dd\omega=0$. The Gauss formula gives
\[
\nabla_X\alpha=\frac1p\iota_X\omega,\qquad \nabla_X\omega=-pX^\flat\wedge\alpha.
\]
Consequently, in a local orthonormal frame $e_1,\ldots,e_n$,
\[
\begin{aligned}
\dd\alpha&=\frac1p\sum_j e^j\wedge\iota_{e_j}\omega=\omega,\\
\dstar\omega&=p\sum_j\iota_{e_j}(e^j\wedge\alpha)=p\left(n\alpha-\sum_j e^j\wedge\iota_{e_j}\alpha\right)=p(n-p+1)\alpha,\\
\DeltaH\omega&=\dd\dstar\omega=p(n-p+1)\dd\alpha=p(n-p+1)\omega.
\end{aligned}
\]
Thus the hemisphere attains the lower bound.
\end{proof}


\end{document}